\documentclass[a4paper, reqno]{amsart}
\usepackage{amsmath, amssymb, eucal, amscd, verbatim, tikz, enumerate}

\topmargin -0in
\textwidth 6in
\textheight 8.25in
\oddsidemargin -0in
\evensidemargin -0in

\newtheorem{thm}{Theorem}[section]
\newtheorem{lem}[thm]{Lemma}
\newtheorem{eg}[thm]{Example}
\newtheorem{prop}[thm]{Proposition}
\newtheorem{cor}[thm]{Corollary}
\newtheorem{defn}[thm]{Definition}
\newtheorem{rem}[thm]{Remark}

\newcommand{\ti}{\tilde}

\newcommand{\bt}{\mathbf{t}}
\newcommand{\br}{\mathbf{r}}
\newcommand{\bs}{\mathbf{s}}
\newcommand{\bp}{\mathbf{p}}
\newcommand{\bq}{\mathbf{q}}
\newcommand{\kk}{\mathfrak{k}}

\newcommand{\BR}{\mathbb{R}}
\newcommand{\RP}{\mathbb{R}_+}
\newcommand{\BN}{\mathbb{N}}
\newcommand{\BC}{\mathbb{C}}

\newcommand{\BZ}{\mathbb{Z}}
\newcommand{\BF}{\mathbb{F}}
\newcommand{\BA}{\mathbb{A}}
\newcommand{\BQ}{\mathbb{Q}}

\newcommand{\CF}{\mathcal{F}}

\newcommand{\KI}{\mathfrak{I}}

\newcommand{\KJ}{\mathfrak{J}}

\newcommand{\BB}{\mathbb{U}}

\newcommand{\KM}{\mathcal{M}}

\begin{document}

\title{The topology on Berkovich affine lines over complete valuation rings}

\author{Chi-Wai Leung and Chi-Keung Ng}

\address[Chi-Wai Leung]{Department of Mathematics, The Chinese University of Hong Kong, Hong Kong.}
\email{cwleung@math.cuhk.edu.hk}

\address[Chi-Keung Ng]{Chern Institute of Mathematics and LPMC, Nankai University, Tianjin 300071, China.}
\email{ckng@nankai.edu.cn}

\keywords{complete valued fields, valuation rings, affine analytic spaces, Berkovich spectrum, Banach group rings}

\subjclass[2010]{Primary: 32P05, 32C18, 13F30; Secondary: 12J25, 13A18, 13F20}

\date{\today}

\begin{abstract}
In this article, we give a full description of the topology of the one dimensional affine analytic space $\BA_R^1$ over a complete valuation ring $R$ (i.e. a valuation ring with ``real valued valuation'' which is complete under the induced metric), when its field of fractions $K$ is algebraically closed.
In particular, we show that $\BA_R^1$ is both connected and locally path connected. 
Furthermore, $\BA_R^1$ is the completion of $K\times (1,\infty)$ under a canonical uniform structure. %
As an application, we describe the Berkovich spectrum $\KM(\BZ_p[G])$ of the Banach group ring $\BZ_p[G]$ of a cyclic $p$-group $G$ over the ring $\BZ_p$ of $p$-adic integers.
\end{abstract}

\maketitle

\section{Introduction and notation}

Let $S$ be  a commutative unital Banach ring with its norm being denoted by $\|\cdot\|$.
The \emph{Berkovich spectrum} $\KM(S)$ as well as the \emph{$n$-dimensional affine analytic space} $\BA_S^n$ over $S$ was introduced by Vladimir Berkovich (see, e.g., \cite{Berk90}) %
and was further studied by J\'{e}r\^{o}me Poineau in \cite{Poin}.
More precisely, $\KM(S)$ is the set of all non-zero contractive multiplicative semi-norms on $S$, while $\BA_S^n$ is the set of all non-zero multiplicative seminorms on the $n$-variables polynomial ring $S[\bt_{1},...,\bt_{n}]$ whose restrictions on $S$ are contractive.
The topologies on both $\KM(S)$ and $\BA_S^n$ are the ones given by pointwise convergence.
The topology on $\BA_S^n$ is also induced by the \emph{Berkovich uniform structure} which is given by a fundamental system of entourages consisting of sets of the form
\begin{equation}\label{eqt:def-entour}
E_\epsilon^X:=\big\{(\mu, \nu)\in \BA_S^n\times \BA_S^n: \big| |\bp|_{\mu} - |\bp|_{\nu} \big| < \epsilon, \text{ for any }\bp\in X\big\},
\end{equation}
where $\epsilon$ runs through all strictly positive real numbers and $X$ runs through all non-empty finite subsets of $S[\bt_1,...,\bt_n]$.
It is not hard to see that $\BA_S^n$ is complete under this uniform structure.

In the case of a non-Archimedean field $L$, the properties of $\BA_L^n$  plays an important role  in the study of non-Archimedean geometry.
For example, the Bruhat-Tits tree of $SL_{2}({\mathbb Q}_{p})$ can be realized as a subspace of the Berkovich projective line, which is a glue of two copies of $\BA^{1}_{{\mathbb Q}_{p}}$ (see \cite{W}).

When $L$ is an algebraically closed non-Archimedean complete valued field, Berkovich gave in \cite{Berk90} a full
description of the space $\BA_L^1$.
This may then be used to describe the  one-dimensional affine analytic spaces over not necessarily algebraically closed fields.

The aim of this article is to give a full description of the topology space $\BA_R^1$ of a ``complete valuation ring'' $R$.
Recall that an integral domain $R$ is a \emph{valuation ring} if for every element $x$ in its field of fractions $K$, either $x\in R$ or $x^{-1}\in R$ (see e.g. \cite[p.65]{AM} or Definition 2 of \cite[\S VI.1]{Bour}). 
It is easy to see that if $R^\times$ and $K^\times$ are the sets of invertible elements in $R$ and $K$ respectively, then $K^\times / R^\times$ is a totally ordered abelian group and the canonical map $\nu_R: K^\times \to K^\times / R^\times$ is a ``valuation'' such that $R=\{x\in K^\times: \nu_R(x) \geq 0\}\cup \{0_R\}$. 

\begin{defn}
	A valuation ring $R$ is called a \emph{complete valuation ring} if $K^\times / R^\times$ is isomorphic to an ordered subgroup of $\BR$ and $R$ is complete under the induced norm. 
\end{defn}

If $R$ is a complete valuation ring, then $K$ is a non-Archimedean complete valued field 
and $R$ coincides with the ring of integers, $\{a\in K: |a| \leq 1\}$, of $K$.
Conversely, the ring of integers of a non-Archimedean complete valued field is always a complete valuation ring.

Throughout this article, %
for a commutative unital Banach ring $S$, we denote
\begin{equation*}\label{eqt:defn-P-lambda}
S^\star:= S\setminus \{0_S\},
\end{equation*}
where $0_S$ is the zero element of $S$. 
The identity of $S$ will be denoted by $1_S$. 
For any $n\in \BN$,  we define
$$S\{n^{-1}\bt\}:=\left\{\sum_{k=0}^\infty a_k \bt^k: \lim_k \|a_k\|n^k = 0\right\}$$
and equip it with the norm $\left\|\sum_{k=0}^\infty a_k \bt^k\right\| := \max_k \|a_k\|n^k$. %
It is well-known that $S\{n^{-1}\bt\}$ is a commutative unital Banach ring. 
For simplicity, we will use the notation $S\{\bt\}$ for $S\{1^{-1}\bt\}$.

It was shown in \cite{Berk90} that for each $n\in \BN$, the compact space $\KM(S\{n^{-1}\bt\})$ can be identified with the subspace $\{\mu\in \BA_S^1: |\bt|_\mu\leq n\}$ of $\BA_S^1$. 
If we set
\begin{equation}\label{eqt:def-open-n-ball}
\BB^S_n:= \{\mu\in \BA_S^1: |\bt|_\mu < n\}, 
\end{equation}
then $\BA_S^1= \bigcup_{n\in \BN} \BB^S_n = \bigcup_{n\in \BN} \KM(S\{n^{-1}\bt\})$ and hence $\BA_S^1$ is both locally compact and $\sigma$-compact.

From now on, $R$ is a complete valuation ring and $K$ is its field of fractions. 
The absolute value on $K$ induced by $\nu_R$ will be denoted by $|\cdot|$. 
The residue field of $R$ is denoted by $F$ and $Q:R \to F$ is the quotient map.
We set
\begin{equation}\label{eqt:defn-ti-Q}
\ti Q: R[\bt]\to F[\bt]
\end{equation}
to be the map induced by $Q$.

Suppose that $s\in K$ and $\bp\in K[\bt]$.
If $r_0,\dots,r_n\in K$ are the unique elements with $\bp = \sum_{k=0} ^n r_k \bt^k$, then we put $\bp_s:= \sum_{k=0} ^n r_k (\bt+s)^k$.
For any $\lambda\in \BA_K^1$, we define
\begin{equation}\label{eqt:def-lambda+s}
|\bp|_{\lambda + s} := |\bp_s|_\lambda \quad (\bp\in K[\bt]) \qquad \text{and} \qquad \lambda - s:= \lambda + (-s).
\end{equation}
It is easy to see that $\lambda + s\in \BA_K^1$, and that $\lambda\mapsto \lambda + s$ is a bicontinuous bijection from $\BA_K^1$ to itself. 

For every $s\in K$ and $\tau\in \RP$, we denote the closed ball with center $s$ and radius $\tau$ by $D(s,\tau)$, i.e.
$$D(s,\tau):= \{t\in K: |t-s| \leq\tau \},$$
and define (as in \cite{BR})  $\zeta_{s,\tau}\in \BA_K^1$ by $|\bp|_{\zeta_{s,\tau}}:= \sup_{t\in D(s,\tau)} |\bp(t)|$ ($\bp\in K[\bt]$).
Because of the maximum modulus principle, one has
\begin{equation}\label{eqt:def-zeta-s-tau}
\left|\sum_{k=0}^n a_k (\bt - s)^k\right|_{\zeta_{s,\tau}} = \max_{k=0, \dots,n} |a_k| \tau^k \qquad (n\in \BN; a_0,\dots,a_n\in K);
\end{equation}
here, we use the convention that $0^0 := 1$.

Let us recall the following result from \cite{Berk90}.

\begin{thm}\label{thm:berk-cl}
(Berkovich)
Let $L$ be an algebraically closed non-Archimedean complete valued field with a non-trivial norm $|\cdot|$, and $|\cdot|_{\lambda}: L[\bt]\to \RP$ be a function.
Then $\lambda\in \BA_L^1$ if and only if there is a decreasing sequence $\{D(s_n,\tau_n)\}_{n\in \BN}$ of closed balls in $L$ such that $|\bp|_{\lambda} = \inf_{n\in \BN}|\bp|_{\zeta_{s_n, \tau_n}}$.
\end{thm}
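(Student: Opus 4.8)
The plan is to prove the two implications separately; the converse (``only if'') is the substantial one. For the ``if'' direction I would argue directly. Given a decreasing sequence $\{D(s_n,\tau_n)\}_{n\in\BN}$ of closed balls, each $\zeta_{s_n,\tau_n}$ lies in $\BA_L^1$ (it is a multiplicative seminorm, its values being given by \eqref{eqt:def-zeta-s-tau}, and its restriction to $L$ is $|\cdot|$, hence contractive and non-zero). Since $D(s_{n+1},\tau_{n+1})\subseteq D(s_n,\tau_n)$ gives $|\bp|_{\zeta_{s_{n+1},\tau_{n+1}}}=\sup_{D(s_{n+1},\tau_{n+1})}|\bp|\le\sup_{D(s_n,\tau_n)}|\bp|=|\bp|_{\zeta_{s_n,\tau_n}}$, the infimum defining $|\bp|_\lambda$ is a monotone limit for each fixed $\bp$. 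As absolute homogeneity, the (ultrametric) triangle inequality and multiplicativity all pass to pointwise limits, $|\cdot|_\lambda$ is a multiplicative seminorm on $L[\bt]$; it is non-zero because $|1_L|_\lambda=1$, and its restriction to $L$ is $|\cdot|$, so $\lambda\in\BA_L^1$.

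For the converse, fix $\lambda\in\BA_L^1$. I would first make two reductions. Since $\lambda$ is contractive on $L$ we get $|m\cdot 1_L|_\lambda\le1$ for every $m\in\BZ$, so $|\cdot|_\lambda$ is non-Archimedean on $L[\bt]$; and the restriction of $\lambda$ to the field $L$, being a non-zero contractive multiplicative seminorm, equals $|\cdot|$ (for $x\in L^\times$, multiplicativity gives $|x|_\lambda|x^{-1}|_\lambda=1$, which with $|x|_\lambda\le|x|$ and $|x^{-1}|_\lambda\le|x^{-1}|$ forces $|x|_\lambda=|x|$). Next, since $L$ is algebraically closed, every $\bp\in L[\bt]$ factors as $\bp=c\prod_i(\bt-a_i)$ with $c,a_i\in L$, so $|\bp|_\lambda=|c|\prod_i|\bt-a_i|_\lambda$ and, by \eqref{eqt:def-zeta-s-tau}, $|\bp|_{\zeta_{s,\tau}}=|c|\prod_i\max(|a_i-s|,\tau)$. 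Thus it suffices to produce a decreasing sequence $\{D(s_n,\tau_n)\}$ such that, for every $a\in L$, the quantity $\max(|a-s_n|,\tau_n)$ is non-increasing in $n$ and converges to $|\bt-a|_\lambda$; everything then comes down to understanding the function $\phi(a):=|\bt-a|_\lambda$ on $L$, which the ultrametric inequality shows to be $1$-Lipschitz.

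Now set $\tau:=\inf_{a\in L}\phi(a)$ and split into cases. If $\tau=\phi(s)$ for some $s$, then for any $a$ the ultrametric inequality applied to $\bt-a=(\bt-s)-(a-s)$ gives $\phi(a)\le\max(\tau,|a-s|)$, while $|a-s|=|(\bt-a)-(\bt-s)|_\lambda\le\max(\phi(a),\tau)$ forces $\phi(a)=|a-s|$ when $|a-s|>\tau$ and $\phi(a)=\tau$ otherwise; hence $\phi(a)=\max(|a-s|,\tau)=|\bt-a|_{\zeta_{s,\tau}}$ for all $a$, so $\lambda=\zeta_{s,\tau}$ and the constant sequence $D(s,\tau)$ works. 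If the infimum is not attained, then $\phi(a)>\tau$ for all $a$; I would pick $s_n\in L$ with $\tau_n:=\phi(s_n)$ strictly decreasing to $\tau$. Then $|s_{n+1}-s_n|=|(\bt-s_n)-(\bt-s_{n+1})|_\lambda\le\max(\tau_n,\tau_{n+1})=\tau_n$, so $D(s_{n+1},\tau_{n+1})\subseteq D(s_n,\tau_n)$; and for fixed $a$ and all $n$ large enough that $\tau_n<\phi(a)$, the same ultrametric manipulation forces $|a-s_n|=\phi(a)$, so $\max(|a-s_n|,\tau_n)=\phi(a)$ eventually. Together with the nesting of the balls and the factorization, this shows $|\bp|_{\zeta_{s_n,\tau_n}}$ is non-increasing in $n$ and eventually equal to $|\bp|_\lambda$, whence $\inf_n|\bp|_{\zeta_{s_n,\tau_n}}=|\bp|_\lambda$.

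The hard part is conceptual rather than computational: identifying the right invariant of $\lambda$ — the function $\phi$ together with its infimum $\tau$ — and seeing that the ultrametric inequality rigidly pins down $\phi$ away from the ``critical radius'' $\tau$. Once that is in hand the two cases are short, and no spherical-completeness hypothesis on $L$ is needed; indeed, in the non-attained case one automatically has $\tau>0$, since $\tau_n\downarrow 0$ would make $(s_n)$ Cauchy, hence convergent in $L$, contradicting non-attainment.
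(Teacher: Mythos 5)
Your proof is correct. Note that the paper does not prove this statement at all: it is recalled verbatim as a theorem of Berkovich and cited to \cite{Berk90}, so there is no in-paper argument to compare against. What you have written is essentially the standard classification argument from Berkovich's book: reduce to linear factors via algebraic closedness and multiplicativity, study $\phi(a)=|\bt-a|_\lambda$ and its infimum $\tau$, and use the Isosceles Triangle Principle to pin down $\phi$ (the attained case giving a point $\zeta_{s,\tau}$ of type I, II or III, the non-attained case giving a nested sequence of disks, i.e.\ a type IV point). All the steps check out, including the monotonicity needed to identify the infimum with the limit in both directions, so nothing further is required.
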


The topology on $\BA_L^1$ was also described in \cite{Berk90}.
Moreover, as noted in \cite{Berk90}, by using the fact that $D(t,\tau) = D(s,\tau)$ whenever $D(s,\tau) \subseteq D(t,\tau)$, one can show easily that $\BA_L^1$ is path connected. 

It is also well-known that $\BA_L^1$ is \emph{locally path connected}, in the sense that for every point in this topological space, there is a local neighborhood basis at that point consisting of open sets that are path connected under the induced topologies.
Indeed, as noted in \cite{BR}, any two points in $\BA_L^1$ are joined by a unique path, and hence $\BA_L^1$ is a $\BR$-tree.
The ``weak topology'' induced by this $\BR$-tree structure coincides with the pointwise convergence topology on $\KM(L\{\bt\})$ (see e.g. \cite[Proposition 1.13]{BR}).
Since the canonical basic neighborhoods of the ``weak topology'' are path connected, we know that $\BB^L_1\subseteq \KM(L\{\bt\})$ (see \eqref{eqt:def-open-n-ball}) is locally path connected.
Furthermore, as $\zeta_{s,0} = \zeta_{0_S,0} + s \in \BB^L_1 +s$ (see \eqref{eqt:def-lambda+s}), the density of the image of $L$ in $\BA_L^1$ implies
$\BA_L^1 = \bigcup_{s\in L} \BB^L_1 + s$, and this gives the local path connectedness of $\BA_L^1$.

Observe that if the complete valuation ring $R$ is actually a field, then the absolute value $|\cdot|$ on $K$ is trivial, and the structure of $\BA_R^1$ is already given in \cite[1.4.4]{Berk90}.
However, because we need a concrete presentation of this space for the general case, we will first have a closer look at this case in Proposition \ref{prop:1-D-aff-anal-sp-tri-val}.
As a sidetrack, we verify the fact that if two fields $\kk_1$ and $\kk_2$ are endowed with the trivial norm, then $\BA_{\kk_1}^1 \cong \BA_{\kk_2}^1$ if and only if the cardinalities of the sets of monic irreducible polynomials over them are the same.

Suppose that $R$ is not a field, or equivalently, the absolute value $|\cdot|$ on $K$ is non-trivial.
Let us pick an arbitrary number $\omega\in [1,\infty)$, and
\begin{quote}
	\emph{set $K^\omega$ to be the field $K$ equipped with the equivalent norm $|\cdot|^\omega$.}
\end{quote}
As $|a|^\omega\leq |a|$ ($a\in R$), we know that every semi-norm $\lambda\in \BA_{K^\omega}^1$ restricts to an element in $\BA_R^1$ and this gives %
a map
$$J_\omega^\BA: \BA_{K^\omega}^1 \to \BA_R^1.$$
The map $J_\omega^\BA$ is injective, because for any $\bp\in K[\bt]$, there exists $a\in R$ with $a \bp \in R[\bt]$.
On the other hand, the surjection $\ti Q$ as in \eqref{eqt:defn-ti-Q} produces an injection
$$Q^\BA: \BA_F^1\to \BA_R^1.$$
It is not hard to see that one actually has $\BA_R^1 = Q^\BA(\BA_F^1)\cup \bigcup_{\omega\in [1,\infty)} J^\BA_\omega(\BA_{K^\omega}^1)$ (Proposition \ref{prop:ele-AAS-val-ring}).
Note, however, that in the case of a general Banach integral domain $S$, elements in $\BA_S^1$ cannot be described in such an easy way; for example, if $S$ is the ring $\BZ$ equipped with the trivial norm, the description of elements in $\BA_S^1$ requires the knowledge of all multiplicative ultrametric norms on $\BQ$, instead of just the trivial norm on $\BQ$ (which is the one induced from $S$). 

The topology on $\BA_R^1$ is more difficult to describe, and we will give a full presentation of it in Theorem \ref{thm:AAS-of-val-ring}, in the case when $K$ is algebraically closed.
Using this description, we obtain in Theorem \ref{thm:connect}(c) that  $\BA_R^1$ is first countable if and only if $F$ is countable and $\BA_K^1$ is first countable.
We will also verify, in Proposition \ref{prop:2nd-count}, that $\BA_R^1$ is second countable if and only if $R$ is separable as a metric space  (or equivalently, $K$ is a separable metric space).
Moreover, the Berkovich uniform space $\BA_R^1$ is the completion of $K\times (1,\infty)$ under the induced uniform structure (see Remark \ref{rem:conv}(c)).

We also show that $\BA_R^1$ is both connected and locally path connected (parts (a) and (b) of Theorem \ref{thm:connect}).
Notice that, unlike the case of $\BA_K^1$, any two points in a connected open subset of $\BA_R^1$ are joined by infinitely many paths inside that subset (see Remark \ref{rem:open-connected-subset}).
Consequently, the topology on $\BA_R^1$ cannot be described using the ``weak topology'' of a $\BR$-tree structure.

Finally, we will apply our main result to give a description of the Berkovich spectrum of the Banach group ring $R[G]$ of a cyclic group $G$ over $R$ (Corollary \ref{cor:spec-cyclic-gp}).
In the case when $K$ is not necessarily algebraically closed, one may obtain information about $\KM(R[G])$ by looking at the corresponding spectrum over the completion of the algebraic closure of $K$.
In particular, we will take a closer look at the case when $R=\BZ_p$ and $G$ is a cyclic $p$-group, for a fixed prime number $p$ (Example \ref{eg:order=p}).

\section{The main results}

Let us begin with a careful presentation of the content of the second line of  \cite[1.4.4]{Berk90}.
More precisely, we will give a concrete description of $\BA_\kk^1$ when $\kk$ is a field (not necessarily algebraically closed) equipped with the trivial norm.

In the following, $\kk[\bt]_{\rm irr}$ is the set of all monic irreducible polynomials in $\kk[\bt]$.
Consider $\bq, \bq'\in \kk[\bt]_{\rm irr}$ as well as $\kappa, \kappa'\in \RP$.
We define a semi-norm $\gamma_{\bq, \kappa}$ on $\kk[\bt]$ by
\begin{equation}\label{eqt:defn-lambda-bq-kappa}
\left|\sum_{l=0}^n \br_l \bq^l\right|_{\gamma_{\bq, \kappa}} := \max_{\br_l\neq 0} \kappa^l
\end{equation}
(again, $0^0:=1$),
where $\br_0,\dots,\br_{n-1}\in \kk[\bt]$ and $\br_n\in \kk[\bt]^\star$ are elements with degrees strictly less than $\deg \bq$.
Note that, because the absolute value on $\kk$ is trivial, one has (see \eqref{eqt:def-zeta-s-tau})
\begin{equation}\label{eqt:gamma<->zeta}
\gamma_{\bt - x, \kappa} = \zeta_{x, \kappa} \qquad (x\in \kk;\kappa\in \RP). 
\end{equation}

The semi-norm $\gamma_{\bq,1}$ is independent of $\bq$ and equals the trivial norm on $\kk[\bt]$.
Furthermore, when $\gamma_{\bq, \kappa} = \gamma_{\bq', \kappa'}$, we have $\kappa = \kappa'$, and we will also have $\bq = \bq'$ if, in addition, $\kappa = \kappa'< 1$.
For $\kappa\in (1, \infty)$ and $x\in \kk$, one has $\gamma_{\bt-x,\kappa}(\bp) = \gamma_{\bt, \kappa}(\bp) = \kappa^{\deg \bp}$ ($\bp\in \kk[\bt]$).

Notice that $\gamma_{\bt, \kappa}\in \BA_\kk^1$ for any $\kappa\in \RP$, but $\gamma_{\bq,\kappa}$ is not submultiplicative when $\deg \bq > 1$ and $\kappa > 1$. 
Nevertheless, if $\kappa\in [0,1)$, then $\gamma_{\bq, \kappa}\in \BA_\kk^1$ (regardless of the degree of $\bq$).

On the other hand, for any $\tau\in [-1,\infty)$ and $\bq\in \kk[\bt]_{\rm irr}$, we consider $\delta_{\bq,\tau}$ to be the function from $\kk[\bt]_{\rm irr}$ to $[-1,\infty)$ that vanishes outside the point $\bq$ and sends $\bq$ to $\tau$.
Observe that $\delta_{\bq,0}$ is the constant zero function for all $\bq\in \kk[\bt]_{\rm irr}$.

\begin{prop}\label{prop:1-D-aff-anal-sp-tri-val}
Let $\kk$ be a field endowed with the trivial norm.
Then $\BA_\kk^1$ is canonically homeomorphic to the subspace $X:= \big\{\delta_{\bq,\tau}: \bq \in \kk[\bt]_{\rm irr}\setminus\{\bt\} ; \tau\in  [-1,0)\big\} \cup \big\{\delta_{\bt,\tau}: \tau\in [-1, \infty)\big\}$ of the product space $\prod_{\kk[\bt]_{\rm irr}} [-1,\infty)$.
Consequently, $\BA_\kk^1$ is connected and locally path connected.
Moreover, $\BA_\kk^1$ is first countable if and only if $\kk$ is at most countable.
\end{prop}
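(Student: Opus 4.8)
The plan is to exhibit an explicit homeomorphism $\Phi\colon\BA_\kk^1\to X$, after which all three assertions of the proposition become statements about the space $X$. Define $\Phi$ by $\Phi(\mu)(\bt):=|\bt|_\mu-1$ and $\Phi(\mu)(\bq):=\min\{|\bq|_\mu,1\}-1$ for $\bq\in\kk[\bt]_{\rm irr}\setminus\{\bt\}$; its inverse should be the map $\Psi$ sending $0\in X$ to the trivial norm and $\delta_{\bq,\tau}$ to $\gamma_{\bq,\tau+1}$ (this is well defined on $0$ because $\gamma_{\bq,1}$ is the trivial norm for every $\bq$, and $\gamma_{\bq,\tau+1}\in\BA_\kk^1$ by the observations preceding the proposition). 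Once the homeomorphism is in place, connectedness and local path connectedness follow from the ``star'' shape of $X$, and first countability reduces to a property of the index set $\kk[\bt]_{\rm irr}$.

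First I would check that $\Psi$ is a bijection of $X$ onto $\BA_\kk^1$; this is essentially the content of the second line of \cite[1.4.4]{Berk90}, which I would prove as follows. Any $\mu\in\BA_\kk^1$ restricts to the trivial norm on $\kk$, since for $a\in\kk^\star$ the inequalities $|a|_\mu\le 1$ and $|a^{-1}|_\mu\le 1$ force $|a|_\mu=1$, and $\mu$ is non-Archimedean because $|n\cdot 1_{\kk[\bt]}|_\mu\le 1$ for all $n\in\BZ$. If $|\bt|_\mu>1$, then in any polynomial the leading term strictly dominates in $\mu$, so $|\bp|_\mu=|\bt|_\mu^{\deg\bp}$ and $\mu=\gamma_{\bt,|\bt|_\mu}$. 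If $|\bt|_\mu\le 1$, then $|\bp|_\mu\le 1$ for every $\bp$, and both $\ker\mu$ and $\kp:=\{\bp\in\kk[\bt]:|\bp|_\mu<1\}$ are prime ideals of the principal ideal domain $\kk[\bt]$ with $\ker\mu\subseteq\kp\subsetneq\kk[\bt]$; the three possibilities $\ker\mu=\kp=(0)$, $\ker\mu=(0)\subsetneq\kp=(\bq)$, and $\ker\mu=\kp=(\bq)$ yield respectively that $\mu$ is the trivial norm, $\mu=\gamma_{\bq,r}$ for some $r\in(0,1)$, and $\mu=\gamma_{\bq,0}$ (using that the finite extension $\kk[\bt]/(\bq)$ of $\kk$ admits only the trivial absolute value, together with the base-$\bq$ expansion of a polynomial). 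With the injectivity statements in the lines preceding the proposition, this shows $\Psi$ is a bijection; a direct evaluation of $\gamma_{\bq,\tau+1}$ on $\bt$ and on the monic irreducibles then confirms $\Phi\circ\Psi=\id_X$, so $\Phi$ maps $\BA_\kk^1$ bijectively onto $X$ with $\Phi^{-1}=\Psi$.

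Next I would prove $\Phi$ and $\Psi$ are continuous. Continuity of $\Phi$ is immediate: each coordinate $\mu\mapsto\Phi(\mu)(\bq)$ is the evaluation $\mu\mapsto|\bq|_\mu$ followed by a continuous map $\BR\to\BR$. For $\Psi$ it suffices to show that $x\mapsto|\bp|_{\Psi(x)}$ is continuous $X\to\BR$ for each $\bp\in\kk[\bt]$. At a point $\delta_{\bq_0,\tau_0}$ with $\tau_0\neq 0$ this is clear, since for small $\eta$ the set $\{f\in X:|f(\bq_0)-\tau_0|<\eta\}$ is a neighbourhood consisting only of points $\delta_{\bq_0,\tau}$ with $|\tau-\tau_0|<\eta$, on which $|\bp|_{\Psi(\cdot)}$ is an explicit continuous function of $\tau$ (a power of $\tau+1$, with exponent $\deg\bp$ when $\tau>0$ and the multiplicity of $\bq_0$ in $\bp$ when $\tau<0$). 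The delicate point — and the main obstacle of the proof — is continuity at the base point $0\in X$, which is not first countable when $\kk$ is uncountable, so one cannot argue with sequences. Here, given $\epsilon>0$, one takes the product-basic neighbourhood of $0$ obtained by constraining the (finitely many) monic irreducible factors of $\bp$, together with $\bt$, to lie within a suitably small $\eta$ of $0$; any $\delta_{\bq,\tau}$ in this neighbourhood either has $\bq$ prime to $\bp$ with $\bq\neq\bt$, so that $|\bp|_{\Psi(\delta_{\bq,\tau})}=1$ exactly, or satisfies $|\tau|<\eta$, so that $|\bp|_{\Psi(\delta_{\bq,\tau})}$ is a power $(\tau+1)^k$ with $0\le k\le\deg\bp$ and hence within $\epsilon$ of $1=|\bp|_{\Psi(0)}$.

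Finally, transporting everything through $\BA_\kk^1\cong X$, I would read off the remaining claims. Each ray $R_\bq:=\{\delta_{\bq,\tau}:\tau\ \text{admissible}\}\cup\{0\}$ is homeomorphic to an interval, and $s\mapsto\delta_{\bq,(1-s)\tau}$ is a path in $R_\bq$ from $\delta_{\bq,\tau}$ to $0$; since all rays meet at $0$, the space $X$ is path connected, hence connected. Local path connectedness holds because off the base point $X$ has a neighbourhood basis of intervals (as above), while at $0$ the product-basic neighbourhoods obtained by constraining finitely many coordinates are unions of ray-segments through $0$ and are therefore path connected. Lastly, $X$ is first countable if and only if $\kk$ is at most countable: if $\kk$ is at most countable then $\kk[\bt]_{\rm irr}$ is countable and $X$ embeds in the metrizable space $\prod_{\kk[\bt]_{\rm irr}}[-1,\infty)$; conversely, if $\kk$ is uncountable then any countable family of neighbourhoods of $0$ constrains only countably many coordinates, so picking $\bq^\ast=\bt-x$ with $x\in\kk$ outside all of them, each of those neighbourhoods contains $\delta_{\bq^\ast,-1}$, which nevertheless misses the neighbourhood $\{f\in X:|f(\bq^\ast)|<1/2\}$ of $0$; hence $0$ has no countable neighbourhood basis.
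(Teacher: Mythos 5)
Your proposal is correct and follows essentially the same route as the paper: classify the points of $\BA_\kk^1$ as the semi-norms $\gamma_{\bq,\kappa}$ (via the prime ideal $\{\bp:|\bp|_\mu<1\}$ and the ultrametric dominance of terms in the base-$\bq$ expansion), and then identify $\gamma_{\bq,\kappa}$ with $\delta_{\bq,\kappa-1}\in X$. Your only real departure is that you verify bicontinuity directly — giving an explicit coordinate formula for $\Phi$ and a neighbourhood argument for $\Psi$ at the trivial norm — where the paper invokes compactness of $\KM(\kk\{\bt\})$ and leaves the rest as ``not hard to verify''; this is a welcome elaboration rather than a different method.
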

\begin{proof}
Since the second and the third statements follow easily from the first one, we will only establish the first statement %
(observe that $\kk[\bt]_{\rm irr}$ is countable if and only if $\kk$ is at most countable). 
Let us show that
\begin{equation}\label{eqt:ele-AAS-trivial}
\BA_\kk^1 =  \{\gamma_{\bq,\kappa}: \bq\in \kk[\bt]_{\rm irr}\setminus \{\bt\}; \kappa\in [0,1)\} \cup \{\gamma_{\bt, \tau}: \tau\in \RP\}.
\end{equation}
In fact, consider any $\lambda\in \BA_\kk^1$ and set $\tau := |\bt|_\lambda$.
If $\tau \in \RP \setminus \{1\}$, one may deduce from the Isosceles Triangle Principle that $\lambda = \gamma_{\bt, \tau}$.

Suppose that $\tau = 1$.
Clearly, $|\bp|_{\lambda} \leq |\bp|_{\gamma_{\bt,1}}$ ($\bp\in \kk[\bt]$) and we consider the case when $\lambda\neq \gamma_{\bt,1}$.
Let
$$P^\lambda:=\{\bp\in \kk[\bt]: |\bp|_\lambda < 1\}.$$
As $P^\lambda$  is a non-zero prime ideal of $\kk[\bt]$, we know that $P^\lambda  = \bq\cdot \kk[\bt]$ for a unique element $\bq\in \kk[\bt]_{\rm irr}\setminus \{\bt\}$ (note that $|\bt|_\lambda = \tau = 1$), and we put
$\kappa :=|\bq|_\lambda\in [0,1)$.
For any $n\in \BZ_+$ and $\br_0,\dots,\br_n\in \kk[\bt]$ with $\br_n \neq 0$ and $\deg \br_k < \deg \bq $ ($k=0,\dots,n$),
we know from the Isosceles Triangle Principle that $\left|\sum_{k=0}^n \br_k \bq^k\right|_\lambda = \left|\sum_{k=0}^n \br_k \bq^k\right|_{\gamma_{\bq, \kappa}}$.

Next, we define a map $\Phi: \BA_\kk^1\to X$ by
$\Phi(\gamma_{\bq, \kappa}) := \delta_{\bq, \kappa - 1}$ ($\gamma_{\bq, \kappa}\in \BA_\kk^1$).
Clearly, $\Phi$ is bijective, and is continuous on the two subsets $\{\gamma_{\bq,\kappa}: \bq\in \kk[\bt]_{\rm irr}; \kappa\in [0,1)\}$ and $\{\gamma_{\bt, \tau}: \tau\in [1,\infty)\}$.
As $\Phi$ restricts to a homeomorphism on the compact subset $\KM(\kk\{\bt\})$ of $\BA_\kk^1$, it is not hard to verify that $\Phi$ is actually bicontinuous.
\end{proof}

If $\kk$ is algebraically closed, then we have, by Equalities \eqref{eqt:gamma<->zeta} and \eqref{eqt:ele-AAS-trivial},
\begin{equation}\label{eqt:BA-kk}
\BA_\kk^1 =  \{\zeta_{s,\kappa}: s\in \kk^\star; \kappa\in [0,1)\} \cup \{\zeta_{0_\kk, \tau}: \tau\in \RP\}.
\end{equation}

Now, consider $\kk_1$ and $\kk_2$ to be two (not necessarily algebraically closed) fields equipped with the trivial norms.
If the cardinalities of $\kk_1[\bt]_{\rm irr}$ and $\kk_2[\bt]_{\rm irr}$ are the same, then Proposition \ref{prop:1-D-aff-anal-sp-tri-val} implies that $\BA_{\kk_1}^1$ is homeomorphic to $\BA_{\kk_2}^1$.
Conversely, suppose that we have a homeomorphism $\Psi: \BA_{\kk_1}^1 \to \BA_{\kk_2}^1$.
For any $\bq_1\in \kk_1[\bt]_{\rm irr}$, it is easy to see that $\Psi$ will send $\gamma_{\bq_1, 0}$ to $\gamma_{\bq_2, 0}$ for some $\bq_2\in \kk_2[\bt]_{\rm irr}$ (and vice-versa), because elements of the form $\gamma_{\bq_i, 0}$ are all the free end points of maximal line-segments of $\BA_{\kk_i}^1$ ($i=1,2$).
Hence, we have a bijection between $\kk_1[\bt]_{\rm irr}$ and $\kk_2[\bt]_{\rm irr}$.
These produce part (a) of the following corollary.
The other parts of this corollary follow from part (a) and some well-known facts.

\begin{cor}
Suppose that $\kk_1$ and $\kk_2$ are two fields equipped with the trivial norm.

\smallskip\noindent
(a) $\BA_{\kk_1}^1$ and $\BA_{\kk_2}^1$ are homeomorphic if and only if the cardinality of $\kk_1[\bt]_{\rm irr}$ equals that of $\kk_2[\bt]_{\rm irr}$.

\smallskip\noindent
(b) If both $\kk_1$ and $\kk_2$ are infinite (as sets), then $\BA_{\kk_1}^1$ and $\BA_{\kk_2}^1$ are homeomorphic if and only if $\kk_1$ and $\kk_2$ has the same cardinality.

\smallskip\noindent
(c) If $\check \kk_1$ is the algebraic closure of $\kk_1$ (again, endowed with the trivial norm), then $\BA_{\kk_1}^1$ is homeomorphic to $\BA_{\check \kk}^1$.
\end{cor}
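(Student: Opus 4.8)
The plan is to reduce the whole corollary to Proposition~\ref{prop:1-D-aff-anal-sp-tri-val} together with elementary cardinal arithmetic. Write $X_i$ for the subspace of $\prod_{\kk_i[\bt]_{\rm irr}}[-1,\infty)$ described in that proposition, so that $\BA_{\kk_i}^1\cong X_i$ ($i=1,2$). For the ``if'' direction of (a), I would begin with a bijection $\sigma$ between $\kk_1[\bt]_{\rm irr}$ and $\kk_2[\bt]_{\rm irr}$; since removing one point from equinumerous sets leaves equinumerous sets, I may arrange that $\sigma(\bt)=\bt$. Permuting coordinates by $\sigma$ then gives a homeomorphism $\prod_{\kk_1[\bt]_{\rm irr}}[-1,\infty)\to\prod_{\kk_2[\bt]_{\rm irr}}[-1,\infty)$ carrying $\delta_{\bq,\tau}$ to $\delta_{\sigma(\bq),\tau}$; because $\sigma$ fixes $\bt$, it maps $X_1$ onto $X_2$, and hence $\BA_{\kk_1}^1\cong\BA_{\kk_2}^1$.

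For the ``only if'' direction of (a), I would characterise the points $\gamma_{\bq,0}$ intrinsically. By Proposition~\ref{prop:1-D-aff-anal-sp-tri-val}, $\BA_\kk^1$ is the union of the ``branches'' $\{\gamma_{\bq,\kappa}:\kappa\in[0,1)\}$ (for $\bq\in\kk[\bt]_{\rm irr}\setminus\{\bt\}$) and $\{\gamma_{\bt,\tau}:\tau\in\RP\}$, all glued at the single point $o:=\gamma_{\bq,1}$ (the trivial norm on $\kk[\bt]$, independent of $\bq$); thus every point of $\BA_\kk^1$ is $o$, or an interior point of a branch, or a ``free end'' $\gamma_{\bq,0}$. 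Working inside the model $X$, one checks that $\BA_\kk^1\setminus\{x\}$ is connected precisely when $x=\gamma_{\bq,0}$ for some $\bq\in\kk[\bt]_{\rm irr}$: deleting $o$ separates the (infinitely many) branches from one another and deleting an interior point of a branch disconnects $\BA_\kk^1$, whereas deleting a point $\gamma_{\bq,0}$ leaves a union of connected subsets all containing $o$, hence a connected set. Being a non-cut point is a topological invariant, so any homeomorphism $\BA_{\kk_1}^1\to\BA_{\kk_2}^1$ restricts to a bijection between $\{\gamma_{\bq,0}:\bq\in\kk_1[\bt]_{\rm irr}\}$ and $\{\gamma_{\bq,0}:\bq\in\kk_2[\bt]_{\rm irr}\}$; since $\bq\mapsto\gamma_{\bq,0}$ is injective (immediate from the definition of $\delta_{\bq,\tau}$, or from the uniqueness statements preceding Proposition~\ref{prop:1-D-aff-anal-sp-tri-val} since $0<1$), this forces $|\kk_1[\bt]_{\rm irr}|=|\kk_2[\bt]_{\rm irr}|$.

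Parts (b) and (c) then follow from (a) by evaluating the cardinal $|\kk[\bt]_{\rm irr}|$. For (b): when $\kk$ is infinite, $x\mapsto\bt-x$ embeds $\kk$ into $\kk[\bt]_{\rm irr}\subseteq\kk[\bt]$ and $|\kk[\bt]|=|\kk|$, so $|\kk[\bt]_{\rm irr}|=|\kk|$; hence, by (a), $\BA_{\kk_1}^1\cong\BA_{\kk_2}^1$ if and only if $|\kk_1|=|\kk_2|$. For (c): since $\check\kk_1$ is algebraically closed, its monic irreducibles are exactly the $\bt-x$, so $|\check\kk_1[\bt]_{\rm irr}|=|\check\kk_1|$; if $\kk_1$ is infinite this equals $|\kk_1|=|\kk_1[\bt]_{\rm irr}|$, and if $\kk_1$ is finite then both $\check\kk_1$ and $\kk_1[\bt]_{\rm irr}$ are countably infinite, so in either case $|\kk_1[\bt]_{\rm irr}|=|\check\kk_1[\bt]_{\rm irr}|$ and (a) yields $\BA_{\kk_1}^1\cong\BA_{\check\kk_1}^1$. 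The one step that really needs care is the claim in the second paragraph that ``$\BA_\kk^1\setminus\{x\}$ is connected'' singles out exactly the points $\gamma_{\bq,0}$ — in particular the verification that $o$ and the interior points of branches are genuine cut points; everything else is routine bookkeeping with the model $X$ and standard cardinal arithmetic.
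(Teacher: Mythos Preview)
Your proposal is correct and follows essentially the same route as the paper: both derive the ``if'' direction of (a) directly from the model $X$ of Proposition~\ref{prop:1-D-aff-anal-sp-tri-val}, and both prove the ``only if'' direction by finding a topological invariant that singles out the points $\gamma_{\bq,0}$, thereby producing a bijection $\kk_1[\bt]_{\rm irr}\to\kk_2[\bt]_{\rm irr}$; parts (b) and (c) are then cardinal arithmetic, which you spell out explicitly whereas the paper simply invokes ``well-known facts''. The only noteworthy difference is the invariant used: the paper describes the $\gamma_{\bq,0}$ as ``the free end points of maximal line-segments'', while you characterise them as the non-cut points of $\BA_\kk^1$ --- your formulation is arguably cleaner and more self-contained, since it is an intrinsic topological property that requires no auxiliary notion of ``line-segment'', and your verification in the model $X$ (that removing $o$ or a branch-interior point disconnects, whereas removing an endpoint does not) is straightforward.
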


In the case when the complete valuation ring $R$ is a field, then $|\cdot|$ is a trivial norm, and Proposition \ref{prop:1-D-aff-anal-sp-tri-val} gives the full description of $\BA_R^1$.

\begin{quotation}
\emph{From now on, we consider the case when $R$ is not a field; or equivalently, $|\cdot|$ is non-trivial.}
\end{quotation}

Let us start with the following simple fact.

\begin{lem}\label{lem:prime-id-ring-of-int}
(a) Suppose that $S$ is an integral domain.
If $P\subseteq S[\bt]$ is a prime ideal with $I:=P\cap S$ being non-zero, then one can find a prime ideal $J$ (not necessarily non-zero) of $(S/I)[\bt]$ such that $P= \Upsilon^{-1}(J)$, where $\Upsilon: S[\bt]\to (S/I)[\bt]$ is the quotient map.

\smallskip\noindent
(b) Suppose that $P\subseteq R[\bt]$ is a non-zero prime ideal.
Then $P\cap R\neq \{0_R\}$ if and only if $P= \ti Q^{-1}(J)$ for a (not necessarily non-zero) prime ideal $J$ of $F[\bt]$.
\end{lem}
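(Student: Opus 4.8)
The plan is to prove part (a) as a routine statement about quotient rings and then deduce part (b) from it, the only real content being the identification of the contracted ideal $P\cap R$.

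\emph{Part (a).} Since $I:=P\cap S$ is contained in $P$ and $P$ is an ideal of $S[\bt]$, the ideal $I\cdot S[\bt]$ — which is precisely the kernel $I[\bt]$ of the coefficient-wise quotient map $\Upsilon\colon S[\bt]\to (S/I)[\bt]$ — is contained in $P$. Hence $P=\Upsilon^{-1}(\Upsilon(P))$, so we may take $J:=\Upsilon(P)$. As $\Upsilon$ is surjective with $\ker\Upsilon\subseteq P$, the induced map is a ring isomorphism $(S[\bt])/P\cong ((S/I)[\bt])/J$; since $P$ is prime the left-hand side is an integral domain, so $J$ is a prime ideal of $(S/I)[\bt]$. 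There is no genuine obstacle in this part.

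\emph{Part (b).} Recall that $R$ is assumed not to be a field, so its maximal ideal $\CM:=\{a\in R:|a|<1\}$ (the kernel of $Q$) is non-zero, and $\CM[\bt]$ is the kernel of $\ti Q$. For the ``if'' direction, if $P=\ti Q^{-1}(J)$ then $P\supseteq\ker\ti Q=\CM[\bt]$, so every non-zero element of $\CM$ is a constant polynomial lying in $P\cap R$; thus $P\cap R\neq\{0_R\}$. For the converse, suppose $I:=P\cap R\neq\{0_R\}$ and fix $a\in I$ with $a\neq 0_R$; since $P$, and hence $I$, is proper, $|a|<1$.

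The one step requiring more than formal manipulation is to show $\CM\subseteq P$. Given $x\in\CM$ with $x\neq 0_R$, we have $|x|<1$, so $|x|^n\leq|a|$ for some $n\in\BN$; then $x^n/a\in R$ and $x^n=a\cdot(x^n/a)\in aR\subseteq P$, and primeness of $P$ forces $x\in P$. This is exactly where the rank-one (Archimedean) nature of the value group enters — powers of an element of absolute value $<1$ become arbitrarily small — and it encodes the fact that the only prime ideals of $R$ are $\{0_R\}$ and $\CM$. Once $\CM\subseteq P$ is established, we get $\CM\subseteq P\cap R=I$, and since $I$ is proper and $R$ is local, $I=\CM$. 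Therefore $(R/I)[\bt]=F[\bt]$ and $\Upsilon$ coincides with $\ti Q$, so part (a) yields $P=\ti Q^{-1}(J)$ for a prime ideal $J$ of $F[\bt]$, completing the proof.
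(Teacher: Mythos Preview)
Your proof is correct and follows essentially the same route as the paper's: for part~(a) both arguments use $\ker\Upsilon=I[\bt]\subseteq P$ to obtain $P=\Upsilon^{-1}(\Upsilon(P))$ with $\Upsilon(P)$ prime, and for part~(b) both reduce to part~(a) via the fact that the only non-zero prime ideal of $R$ is its maximal ideal. The only difference is that the paper cites Bourbaki for this last fact, whereas you supply the short direct argument (powers of $x\in\CM$ eventually fall below $|a|$, forcing $x^n\in aR\subseteq P$); this makes your version self-contained but is not a genuinely different approach.
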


In fact, as $\ker \Upsilon = I[\bt] \subseteq P$, we know that $P = \Upsilon^{-1}(\Upsilon(P))$ and $\Upsilon(P)$ is a prime ideal of $(S/I)[\bt]$.
Furthermore, part (b)  follows from part (a) and the fact that the only non-zero prime ideal of $R$ is its maximal ideal (see e.g. Propositions 6 and 7 of \cite[\S VI.4.5]{Bour}).

\begin{prop}\label{prop:ele-AAS-val-ring}
Let $R$ be a complete valuation ring, which is not a field. 
Under the notations as in the Introduction, one has
$$\BA_R^1 = Q^\BA(\BA_F^1) \cup \bigcup_{\omega\in [1,\infty)} J_\omega^\BA(\BA_{K^\omega}^1).$$
\end{prop}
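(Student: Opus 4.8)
The plan is to prove the two inclusions separately, the nontrivial one being $\BA_R^1 \subseteq Q^\BA(\BA_F^1) \cup \bigcup_{\omega} J_\omega^\BA(\BA_{K^\omega}^1)$. The reverse inclusion is essentially built into the definitions: any $\lambda \in \BA_{K^\omega}^1$ restricts to a multiplicative seminorm on $R[\bt]$ which is contractive since $|a|^\omega \le |a| \le 1$ for $a \in R$, so $J_\omega^\BA$ does land in $\BA_R^1$; similarly $\ti Q$ is a ring homomorphism with $\|Q(a)\| \le \|a\|$ (the residue field carrying the trivial norm), so pulling back a multiplicative seminorm on $F[\bt]$ along $\ti Q$ gives a contractive multiplicative seminorm on $R[\bt]$, hence $Q^\BA(\BA_F^1) \subseteq \BA_R^1$.

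For the forward inclusion, I would take an arbitrary $\mu \in \BA_R^1$ and look at its kernel behaviour on the coefficient ring $R$. Consider the prime ideal $P^\mu := \{\bp \in R[\bt] : |\bp|_\mu = 0\}$ and set $I := P^\mu \cap R$. There are two cases. \textbf{Case 1: $I = \{0_R\}$.} Then $\mu$ is a norm on $R$, i.e. $|a|_\mu \ne 0$ for $a \ne 0_R$; since $\mu$ is multiplicative and contractive on $R$, it is a power $|\cdot|^\omega$ of the given absolute value for some $\omega \in [1,\infty)$ (this uses that $R$ has a unique, up to powers, real-valued absolute value — a standard fact for valuation rings of rank one, and here $\omega \ge 1$ is forced by contractivity $|a|_\mu \le |a| \le 1$ together with multiplicativity). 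Then $\mu$ extends to $K[\bt]$: any $\bp \in K[\bt]$ can be written $a^{-1}\bq$ with $a \in R^\star$, $\bq \in R[\bt]$, and one sets $|\bp|_\mu := |\bq|_\mu / |a|^\omega$; this is well-defined and multiplicative, and restricts to $|\cdot|^\omega$ on $K$, so it defines an element of $\BA_{K^\omega}^1$ mapping to $\mu$ under $J_\omega^\BA$. \textbf{Case 2: $I \ne \{0_R\}$.} Then $I$ is the maximal ideal $\CM$ of $R$ (the only nonzero prime ideal), so $\CM[\bt] \subseteq P^\mu$, meaning $\mu$ kills every polynomial with coefficients in $\CM$; hence $|\cdot|_\mu$ factors through $\ti Q : R[\bt] \to F[\bt]$, giving a multiplicative seminorm $\bar\mu$ on $F[\bt]$ with $|\bp|_\mu = |\ti Q(\bp)|_{\bar\mu}$. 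Its restriction to $F$ is contractive (values in $\{0,1\}$ since $\mu|_R$ is contractive and multiplicative and $F^\times$ consists of units of norm dividing $1$), so $\bar\mu \in \BA_F^1$ and $\mu = Q^\BA(\bar\mu)$.

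The main obstacle, and the step deserving the most care, is Case 1: showing that a contractive multiplicative seminorm on $R$ which happens to be a genuine norm must be a power $|\cdot|^\omega$ with $\omega \ge 1$, and then checking the extension to $K[\bt]$ is consistent with the Gauss-type description. The point is that $\mu|_R$ composed through $K$ gives a real-valued valuation on $K^\times$; since $K^\times/R^\times$ embeds in $\BR$ (the hypothesis that $R$ is a \emph{complete} valuation ring), any such valuation is an order-preserving $\BR$-linear rescaling of $\nu_R$, i.e. $|\cdot|_\mu = |\cdot|^\omega$ on $K$, and contractivity on $R$ pins down $\omega \ge 1$. I would also need to observe, via Lemma \ref{lem:prime-id-ring-of-int}(b), that the dichotomy on $P^\mu \cap R$ is exactly the dichotomy between the two pieces of the claimed union, which makes the two cases exhaustive and ties the argument together cleanly.
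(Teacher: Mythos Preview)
Your proposal is correct and follows essentially the same route as the paper's proof: split on whether $\ker|\cdot|_\mu \cap R$ is zero, factor through $\ti Q$ when it is not, and in the remaining case extend $\mu|_R$ to a multiplicative norm on $K$, identify it as $|\cdot|^\omega$ for some $\omega\ge 1$ via equivalence of absolute values, and then extend to $K[\bt]$ by clearing denominators. The only cosmetic difference is that the paper invokes Lemma~\ref{lem:prime-id-ring-of-int}(b) for the nonzero-kernel case and cites Bourbaki for the ``equivalent norms are powers'' step, whereas you observe $\CM[\bt]\subseteq\ker|\cdot|_\mu$ directly and appeal to the rank-one structure of $K^\times/R^\times$.
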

\begin{proof}
We have already seen that $Q^\BA(\BA_F^1) \cup \bigcup_{\omega\in [1,\infty)} J_\omega^\BA(\BA_{K^\omega}^1)\subseteq \BA_R^1$.
For the other inclusion, let us pick an arbitrary element $\lambda\in \BA_R^1$.

Suppose that $\ker |\cdot|_\lambda \cap R \neq \{0_R\}$.
Then Lemma \ref{lem:prime-id-ring-of-int}(b) gives a unique prime ideal $J\subseteq F[\bt]$ with $\ker |\cdot|_\lambda = \ti Q^{-1}(J)$.
From this, one concludes that $\lambda = \mu \circ \ti Q$ for an element  $\mu \in \BA_F^1$, i.e. $\lambda\in Q^\BA(\BA_F^1)$.

Suppose that $\ker |\cdot|_\lambda \cap R = \{0_R\}$.
Let us extend $\lambda|_R$ to a function $|\cdot|_{\ti \lambda}:K \to \RP$ by setting  $|r|_{\ti\lambda} := |r^{-1}|_\lambda^{-1}$ whenever $r\in K \setminus R$. 
It is not hard to check that $|\cdot|_{\ti\lambda}$ is a multiplicative norm on $K$.
For any $r\in K$, one has $|r| \leq 1$  if and only if $|r|_{\ti\lambda} \leq 1$.
Thus, $|\cdot|_{\ti \lambda}$ is equivalent to $|\cdot|$, and one can find $\omega\in \RP$ satisfying $|\cdot|_{\ti\lambda} = |\cdot|^\omega$ (see e.g. Proposition 3 of \cite[\S VI.3.2]{Bour}).
Moreover, as $|a|_{\ti \lambda}  \leq |a|$ ($a\in R$), we know that $\omega\geq 1$.
Finally, if we put
$$|a^{-1}\bp|_{\bar \lambda} := |a|_\lambda^{-1} |\bp|_\lambda \qquad (a\in R^\star;\bp\in R[\bt]),$$
then $|\cdot|_{\bar \lambda}$ is a well-defined multiplicative seminorm on $K[\bt]$ with $|r|_{\bar \lambda} = |r|_{\ti \lambda}$ ($r\in K$).
Thus, $\bar \lambda\in \BA_{K^\omega}^1$ and $\lambda = J_\omega^\BA(\bar \lambda)$.
\end{proof}

It is clear that both $Q^\BA: \BA_F^1\to \BA_R^1$ and $J_\omega^\BA:\BA_{K^\omega}^1 \to \BA_R^1$ ($\omega\in [1,\infty)$) are homeomorphisms onto their images  (by considering the compact spaces $\KM(F\{ n^{-1}\bt\})$ and $\KM(K^\omega\{ n^{-1}\bt\})$ for all $n\in \BN$).
If $\lambda\in \BA_K^1$ and $\omega\in [1,\infty)$, let us define $\lambda^\omega\in \BA_{K^\omega}^1$ by
$$|\bp|_{\lambda^\omega} := |\bp|_\lambda^\omega \qquad (\bp\in K[\bt]).$$
Obviously, $(\lambda,\omega) \mapsto J_\omega^\BA(\lambda^\omega)$ induces a continuous bijection
\begin{equation}\label{eqt:def-Lambda}
\Lambda: \BA_K^1\times [1,\infty) \to \bigcup_{\omega\in [1,\infty)} J_\omega^\BA(\BA_{K^\omega}^1).
\end{equation}
Later on, we will verify that $\Lambda$ is actually a homeomorphism.
We will also describe how the subspaces $Q^\BA(\BA_{K^\omega}^1)$ and $\bigcup_{\omega\in [1,\infty)} J_\omega^\BA(\BA_{K^\omega}^1)$ sit together in $\BA_R^1$.

\begin{quotation}
\emph{From now on, we will identify $\BA_F^1$ as subspaces of $\BA_R^1$, and may sometimes ignore the map $Q^\BA$ if no confusion arises.}
\end{quotation}

\begin{lem}\label{lem:top-type-I-norm<1}
Suppose that $F$ is algebraically closed.
Consider a net $\{(\lambda_i, \omega_i)\}_{i\in \KI}$ in $\BA_K^1\times [1, \infty)$.
	
	\smallskip\noindent
	(a) If $\{\lambda_{i}^{\omega_i}\}_{i\in \KI}$ converges to an element in  $\BA_F^1$, then $\lim_i \omega_i = \infty$ and $\limsup_{i\in \KI} |\bt|_{\lambda_i}\leq 1$.
	
	\smallskip\noindent
	(b) If $\lambda_i\in \BB^K_1$ for all $i\in \KI$ (see \eqref{eqt:def-open-n-ball}), $\lim_i \omega_i = \infty$ and $\lim_i |\bt|_{\lambda_i}^{\omega_i} = \tau\in [0,1]$, then $\lambda_{i}^{\omega_i} \to \zeta_{0_F, \tau}$.
\end{lem}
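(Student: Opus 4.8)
The plan is to analyse the two parts separately, working directly from Berkovich's description (Theorem~\ref{thm:berk-cl}) of elements of $\BA_K^1$ as infima of the multiplicative seminorms $\zeta_{s,\tau}$, and from the observation that convergence of nets in $\BA_R^1$ or $\BA_{K^\omega}^1$ means pointwise convergence of the values $|\bp|_{(\cdot)}$ on a generating set of polynomials, for which it suffices to test on the polynomials $\bt - a$ with $a \in K$ (and on constants). Throughout I will freely use that $F$ algebraically closed forces, via \eqref{eqt:BA-kk}, that every element of $\BA_F^1$ is of the form $\zeta_{s,\kappa}$ with $s \in F$ and $\kappa \in [0,1)$, together with $\zeta_{0_F,\tau}$ for $\tau \in \RP$; since the limit in (a) lies in $\BA_F^1$ its value on any polynomial $\bp$ is $\max_k |\tilde Q(\bp)_k|\,\kappa^k \le \max_k \kappa^k \le 1$ (here I am using that the residue norm on $F$ is trivial), so in particular the limit of $|\bt|_{\lambda_i^{\omega_i}} = |\bt|_{\lambda_i}^{\omega_i}$ is some number in $[0,1]$.

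For part (a): write $\lambda := \lim_i \lambda_i^{\omega_i} \in \BA_F^1 \subseteq \BA_R^1$, so $\lambda = \mu \circ \tilde Q$ for some $\mu \in \BA_F^1$, and $|a|_\lambda = |Q(a)|_{\mu}$ equals $1$ for every $a \in R \setminus \CM$ (units map to units in the algebraically closed field $F$, on which the norm is trivial, and $\mu$ is a norm on $F[\bt]$ hence nonzero on nonzero constants) while $|a|_\lambda < 1$ for $a$ in the maximal ideal $\CM$ of $R$, since such $a$ have $Q(a) = 0$. Now pick any $a \in \CM \setminus \{0_R\}$, so $0 < |a| < 1$; then $|a|_{\lambda_i^{\omega_i}} = |a|^{\omega_i} \to |a|_\lambda = 0$, and since $0 < |a| < 1$ this forces $\omega_i \to \infty$. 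That proves the first assertion. For the second, suppose toward a contradiction that $\limsup_i |\bt|_{\lambda_i} > 1$; then along a subnet $|\bt|_{\lambda_i} \ge 1 + \delta$ for some $\delta > 0$, whence $|\bt|_{\lambda_i}^{\omega_i} \ge (1+\delta)^{\omega_i} \to \infty$ because $\omega_i \to \infty$ — contradicting that $|\bt|_{\lambda_i^{\omega_i}} \to |\bt|_\lambda \le 1$. Hence $\limsup_i |\bt|_{\lambda_i} \le 1$.

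For part (b): we are given $\lambda_i \in \BB_1^K$, i.e.\ $|\bt|_{\lambda_i} < 1$, together with $\omega_i \to \infty$ and $|\bt|_{\lambda_i}^{\omega_i} \to \tau \in [0,1]$; I must show $\lambda_i^{\omega_i} \to \zeta_{0_F,\tau}$ in $\BA_R^1$. It suffices to check convergence of $|\bp|_{\lambda_i^{\omega_i}} = |\bp|_{\lambda_i}^{\omega_i}$ to $|\bp|_{\zeta_{0_F,\tau}}$ for $\bp$ ranging over a set of polynomials generating the topology, namely the constants $a \in R$ and the linear polynomials $\bt - a$, $a \in R$. For a constant $a \in R^\star$: if $a$ is a unit then $|a| = 1$ so $|a|^{\omega_i} = 1 = |Q(a)|_{\mu}$ where $\zeta_{0_F,\tau} = \mu\circ\tilde Q$; if $a \in \CM$ then $0 < |a| < 1$ so $|a|^{\omega_i} \to 0 = |Q(a)|_\mu$ since $Q(a) = 0$. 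For a linear polynomial $\bt - a$ with $a \in R$: by the ultrametric inequality and $|\bt|_{\lambda_i} < 1$, if $a \in R^\star$ is a unit then $|a| = 1 > |\bt|_{\lambda_i}$ forces $|\bt - a|_{\lambda_i} = |a| = 1$ (isosceles triangle principle), hence $|\bt - a|_{\lambda_i}^{\omega_i} = 1$; on the other side, $|\bt - a|_{\zeta_{0_F,\tau}} = |\tilde Q(\bt - a)|_{\mu}$ and $\tilde Q(\bt-a) = \bt - Q(a)$ with $Q(a) \ne 0$, so by \eqref{eqt:def-zeta-s-tau} this is $\max(|Q(a)|, \tau) = \max(1,\tau) = 1$ (recall $\tau \le 1$), matching. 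If instead $a \in \CM$, then $|a| < 1$, and $|\bt - a|_{\lambda_i} = \max(|\bt|_{\lambda_i}, |a| \cdot \text{(correction)})$ — more carefully, $|\bt-a|_{\lambda_i} \le \max(|\bt|_{\lambda_i}, |a|)$ and $\ge \big||\bt|_{\lambda_i} - |a|\big|$; raising to the power $\omega_i$, the upper bound gives $|\bt-a|_{\lambda_i}^{\omega_i} \le \max(|\bt|_{\lambda_i}^{\omega_i}, |a|^{\omega_i}) \to \max(\tau, 0) = \tau$, while for the lower bound, since $|a|^{\omega_i} \to 0$ and $|\bt|_{\lambda_i}^{\omega_i} \to \tau$, a short estimate on $\big||\bt|_{\lambda_i} - |a|\big|^{\omega_i}$ (splitting according to whether $|\bt|_{\lambda_i} \le 2|a|$ or not, and using $\omega_i \to \infty$) shows $\liminf_i |\bt-a|_{\lambda_i}^{\omega_i} \ge \tau$. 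Hence $|\bt-a|_{\lambda_i}^{\omega_i} \to \tau = \max(|Q(a)|, \tau) = |\bt - a|_{\zeta_{0_F,\tau}}$, since $Q(a) = 0_F$ and by \eqref{eqt:def-zeta-s-tau}. The main technical obstacle is precisely this last limit: controlling $\big||\bt|_{\lambda_i} - |a|\big|^{\omega_i}$ from below when $|\bt|_{\lambda_i}$ and $|a|$ may both be near the same value while $\omega_i \to \infty$; the resolution is that $|a|$ is \emph{fixed} (independent of $i$) while $|\bt|_{\lambda_i}^{\omega_i}$ already converges to $\tau$, so for large $i$ either $|\bt|_{\lambda_i}$ stays bounded away from $|a|$ (giving the bound directly) or $|\bt|_{\lambda_i} \to |a|$, which combined with $\omega_i\to\infty$ would force $|\bt|_{\lambda_i}^{\omega_i} \to 0$, i.e.\ $\tau = 0$, in which case there is nothing to prove for the lower bound. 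Once the linear and constant polynomials are handled, multiplicativity of the seminorms and the fact that polynomials factor (in the limit, over $F$, which is algebraically closed) propagate the convergence to all of $R[\bt]$, or alternatively one invokes that these seminorms all lie in the compact set $\KM(R\{n^{-1}\bt\})$ for suitable $n$ and that pointwise convergence on the generators of a dense subalgebra suffices there.
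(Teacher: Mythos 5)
Part (a) of your proof is correct and is essentially the paper's argument (your direct observation that $|a|^{\omega_i}\to 0$ with $0<|a|<1$ forces $\omega_i\to\infty$ even streamlines the paper's subnet step). One caveat: your preliminary remark that every element of $\BA_F^1$ takes values $\le 1$ is false --- $\zeta_{0_F,\kappa}$ with $\kappa>1$ lies in $\BA_F^1$ and gives $|\bt|=\kappa$ --- but this does not damage the limsup argument, which only needs the limit $|\bt|_\lambda$ to be finite.

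Part (b) has a genuine gap at its final step. You verify convergence of $|\bp|_{\lambda_i^{\omega_i}}$ only for constants and for the linear polynomials $\bt-a$ with $a\in R$, and then assert that this propagates to all of $R[\bt]$. Neither of your two proposed justifications works as stated. First, a general $\bp\in R[\bt]$ does not factor into a constant of $R$ times linear factors $\bt-a$ with $a\in R$: the lemma only assumes $F$ (not $K$) is algebraically closed, and even when $K$ is algebraically closed the roots and leading constant of $\bp$ need not lie in $R$ (e.g.\ $a\bt-1$ with $|a|<1$), so multiplicativity over the tested family does not reach $\bp$. Second, ``pointwise convergence on generators of a dense subalgebra suffices'' is not a valid principle for multiplicative seminorms --- a multiplicative seminorm on $R[\bt]$ is not determined by its values on $R\cup\{\bt-a:a\in R\}$ without further input; to rescue a compactness argument you would have to classify every possible subnet limit in $\KM(R\{\bt\})$ via Proposition \ref{prop:ele-AAS-val-ring} and \eqref{eqt:BA-kk} and check that the limit candidates are separated by linear data, none of which you supply. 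The paper closes exactly this gap by a different mechanism: it first shows that $|\bq|_{\lambda_i^{\omega_i}}\to 0$ for every $\bq\in\ker\ti Q$, using the bound $|\bq|_{\lambda_i}\le\max_k|a_k|<1$ which is \emph{uniform in $i$} (this is where $|\bt|_{\lambda_i}<1$ is really used), and then writes an arbitrary $\bp\in R[\bt]$ as $\bt^k(\bt-b_1)\cdots(\bt-b_m)+\bq_0$ with $\bq_0\in\ker\ti Q$, where the $b_l$ are unit lifts of the nonzero roots of $\ti Q(\bp)$ in the algebraically closed field $F$. Your linear computation then handles the product term, and the $\ker\ti Q$ estimate kills the error term. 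Without the decomposition through $\ti Q$ and the uniform estimate on $\ker\ti Q$, the claimed convergence on all of $R[\bt]$ is not established.
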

\begin{proof}
	(a) By \eqref{eqt:BA-kk} and the assumption, one can find $(x, \tau)\in F^\star \times [0,1) \cup \{0_F\}\times \RP$ such that $\lambda_{i}^{\omega_i} \to \zeta_{x , \tau}$.
	Assume on the contrary that $\{\omega_i\}_{i\in \KI}$ has a bounded subnet.
	Then there is a subnet $\{\omega_{i_j}\}_{j\in J}$ of $\{\omega_i\}_{i\in \KI}$ converging to a number $\omega_0\in [1,\infty)$.
	For any $a\in R$ with $|a| <1$, one gets from 
	$$|a|^{\omega_{i_j}} = |a|_{\lambda_{i_j}^{\omega_{i_j}}}\to |Q(a)|_{\zeta_{x , \tau}} = 0,$$
	that $|a|^{\omega_0} = 0$, and this contradicts the %
	non-trivial assumption of $|\cdot|$.
	Hence, $\omega_i \to \infty$.
	On the other hand, since $\{|\bt|_{\lambda_i}^{\omega_i}\}_{i\in \KI}$ converges to $\big|\ti Q(\bt)\big|_{\zeta_{x , \tau}}$, one concludes that $\limsup_{i\in \KI} |\bt|_{\lambda_i} \leq 1$, because otherwise, there exist $r > 1$ and a subnet $\{\lambda_{i_j}\}_{j\in \KJ}$ with $|\bt|_{\lambda_{i_j}} \geq r$, which produces the contradiction that $|\bt|_{\lambda_{i_j}}^{\omega_{i_j}}\geq r^{\omega_{i_j}}\to \infty$.
	
	\smallskip\noindent
	(b) If $b\in R$ with $|b| = 1$, then $|\bt - b|_{\lambda_{i}^{\omega_i}} = 1$ (because $|\bt|_{\lambda_i}< 1$) for all $i\in \KI$.
	On the other hand, consider a polynomial $\bq\in \ker \ti Q$.
	There exist $a_0,\dots,a_n\in \ker Q$ with $\bq = \sum_{k=0}^n a_k \bt^k$.
	Hence,
	$$|\bq|_{\lambda_i} \leq \max_{k=0,\dots,n} |a_k||\bt|_{\lambda_i}^k \leq \max_{k=0,\dots,n} |a_k| < 1,$$
	and one has $|\bq|_{\lambda_{i}^{\omega_i}} \leq \max_{k=0,\dots,n} |a_k|^{\omega_i} \to 0$ (along $i$).
	
	Now, let $\bp\in R[\bt]^\star$ and $\{x_1,\dots,x_m\}$ be all the non-zero roots of $\ti Q(\bp)$ in $F$ (counting multiplicity).
	Pick $b_1,\dots,b_m\in R$ with $Q(b_l) = x_l$ ($l=1,\dots,m$).
	Then $|b_l| = 1$ for all $l\in\{1,\dots,m\}$, and
	one can find $k\in \BZ_+$ as well as $\bq_0\in \ker \ti Q$ satisfying
	$$\bp = \bt^k \cdot (\bt - b_1)\cdots (\bt - b_m) + \bq_0. $$
	The above and the hypothesis will then tell us that $|\bt^k \cdot (\bt - b_1)\cdots (\bt - b_m)|_{\lambda_{i}^{\omega_i}} \to \tau^k$ and $|\bq_0|_{\lambda_{i}^{\omega_i}} \to 0$.
	Consequently, $|\bp|_{\lambda_{i}^{\omega_i}} \to \tau^k = |\ti Q(\bp)|_{\zeta_{0_F, \tau}}$ as is required (observe that $0\leq \tau \leq 1$).
\end{proof}

The following is our first main theorem that gives a full description of the topological space $\BA_R^1$.

\begin{thm}\label{thm:AAS-of-val-ring}
Let $R$ be a complete valuation ring which is not a field, $F$ be the residue field of $R$, and $K$ be the field of fractions of $R$ equipped with the induced absolute value $|\cdot|$.
Suppose that $K$ is algebraically closed. 
We % removed "will" 
fix a cross section $\ti F\subseteq R$ of $Q:R\to F$ that contains $0_R$.

\smallskip\noindent
(a) $\BA_F^1$ is closed in $\BA_{R}^1$.

\smallskip\noindent
(b) The map $\Lambda: \BA_K^1\times [1,\infty) \to \bigcup_{\omega\in [1,\infty)} J_\omega^\BA(\BA_{K^\omega}^1)$ % removed "as"
in \eqref{eqt:def-Lambda} is a homeomorphism.

\smallskip\noindent
(c) Suppose that $\{(\lambda_i, \omega_i)\}_{i\in \KI}$ is a net in $\BA_K^1\times [1,\infty)$.
Then $\{\lambda_{i}^{\omega_i}\}_{i\in \KI}$ converges to an element $\lambda_0\in \BA_F^1$ if and only if $\omega_i \to \infty$ and either one of the following holds:
\begin{enumerate}[\ \ C1).]
	\item there exist $\tau_1\in [0,1)$ and $b\in R$ such that $|\bt - b|_{\lambda_i}^{\omega_i} \to \tau_1$ (in this case, $\lambda_0= \zeta_{Q(b), \tau_1}$);
	
	\item one can find $\tau_2\in (1,\infty)$ such that $|\bt|_{\lambda_i}^{\omega_i}\to \tau_2$ (in this case, $\lambda_0 = \zeta_{0_F, \tau_2}$);

	\item $|\bt - c|_{\lambda_i}^{\omega_i}\to 1$ for any $c\in \ti F$ (in this case, $\lambda_0 = \zeta_{0_F,1}$).
\end{enumerate}

\smallskip\noindent
(d) Under the homeomorphism in part (b), one may regard  $\BA_{K}^1\times [1,\infty)$  as an open dense subset of $\BA_R^1$.
Consequently, the image of $K\times (1,\infty)$ is dense in $\BA_R^1$.
\end{thm}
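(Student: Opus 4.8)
The plan is to read off part (d) from parts (a)--(c) and Proposition \ref{prop:ele-AAS-val-ring}. \emph{First, openness.} I would show that $\bigcup_{\omega\in[1,\infty)}J_\omega^\BA(\BA_{K^\omega}^1)$ is exactly the complement of $\BA_F^1$ in $\BA_R^1$. By Proposition \ref{prop:ele-AAS-val-ring} these two subsets cover $\BA_R^1$, so I only need their disjointness, which is essentially the dichotomy in the proof of that proposition: if $\mu\in\BA_F^1$ then $\mu\circ\ti Q$ kills the maximal ideal $\ker Q$ of $R$, which is non-zero because $R$ is not a field, so its kernel meets $R$ non-trivially; whereas for $\bar\lambda\in\BA_{K^\omega}^1$ the restriction $\bar\lambda|_K$ is a non-zero multiplicative seminorm on a field, hence a norm, so $J_\omega^\BA(\bar\lambda)$ restricts to a norm on $R$ and its kernel meets $R$ only in $\{0_R\}$. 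Thus $\bigcup_\omega J_\omega^\BA(\BA_{K^\omega}^1)=\BA_R^1\setminus\BA_F^1$, which is open by part (a); via the homeomorphism $\Lambda$ of part (b), this realizes $\BA_K^1\times[1,\infty)$ as an open subset of $\BA_R^1$.

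\emph{Second, density}, i.e. $\BA_F^1\subseteq\overline{\BA_R^1\setminus\BA_F^1}$. Since $K$ is algebraically closed, its residue field $F$ is too, so by \eqref{eqt:BA-kk} every $\lambda_0\in\BA_F^1$ has one of the forms $\zeta_{s,\kappa}$ with $s\in F^\star$, $\kappa\in[0,1)$, or $\zeta_{0_F,\tau}$ with $\tau\in\RP$. For each case I would produce an explicit net $\{(\lambda_n,\omega_n)\}_{n\in\BN}$ in $\BA_K^1\times[1,\infty)$, always with $\omega_n:=n$, and invoke part (c): for $\lambda_0=\zeta_{s,\kappa}$ with $s\in F^\star$, pick $b\in R$ with $Q(b)=s$ and set $\lambda_n:=\zeta_{b,\kappa^{1/n}}$ (reading $\kappa^{1/n}:=0$ when $\kappa=0$), so that $|\bt-b|_{\lambda_n}^{\omega_n}=\kappa$ and criterion C1 gives $\lambda_n^{\omega_n}\to\zeta_{Q(b),\kappa}=\lambda_0$; for $\lambda_0=\zeta_{0_F,\tau}$ with $\tau\in[0,1)$ use the same net with $b:=0_R$; for $\tau\in(1,\infty)$ set $\lambda_n:=\zeta_{0_K,\tau^{1/n}}$, so $|\bt|_{\lambda_n}^{\omega_n}=\tau$ and C2 applies; and for $\lambda_0=\zeta_{0_F,1}$ set $\lambda_n:=\zeta_{0_K,1}$, noting that $|\bt-c|_{\lambda_n}=\max(1,|c|)=1$ for every $c\in\ti F\subseteq R$, so C3 applies. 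In each case $\{\lambda_n^{\omega_n}\}=\{\Lambda(\lambda_n,\omega_n)\}$ lies in $\BA_R^1\setminus\BA_F^1$ and converges to $\lambda_0$; hence $\BA_K^1\times[1,\infty)$ is dense, and therefore open and dense, in $\BA_R^1$.

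\emph{Finally, the concluding assertion.} Here I would combine the density of (the image of) $K$ in $\BA_K^1$, recalled in the introduction for an algebraically closed complete valued field, with the trivial density of $(1,\infty)$ in $[1,\infty)$ to conclude that $K\times(1,\infty)$ is dense in $\BA_K^1\times[1,\infty)$; since the latter is dense in $\BA_R^1$ by the previous step and ``dense in'' is transitive, the image of $K\times(1,\infty)$ is dense in $\BA_R^1$.

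I expect the two points needing a little care to be, first, the identification $\bigcup_\omega J_\omega^\BA(\BA_{K^\omega}^1)=\BA_R^1\setminus\BA_F^1$ underlying openness (the disjointness argument together with part (a)), and second, the observation on the density side that the special point $\zeta_{0_F,1}$ --- the trivial norm on $F[\bt]$ --- is out of reach of criteria C1 and C2 and genuinely requires C3. Everything else is routine substitution into parts (a)--(c) and \eqref{eqt:BA-kk}.
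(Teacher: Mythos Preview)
Your proposal addresses only part (d), taking (a)--(c) as given; on that reading it is correct and follows essentially the same route as the paper: openness from (a) together with the decomposition of Proposition~\ref{prop:ele-AAS-val-ring}, density via the very same approximating sequences $\zeta_{b,\tau^{1/n}}^n$ (handled case by case through C1, C2, C3), and the concluding claim from the density of $K$ in $\BA_K^1$. You are slightly more explicit than the paper about the disjointness $\BA_F^1\cap\bigcup_\omega J_\omega^\BA(\BA_{K^\omega}^1)=\varnothing$, which the paper leaves implicit, but otherwise the arguments coincide.
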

\begin{proof}
The non-triviality of $|\cdot|$ produces $a_0\in R$ with $|a_0|\in (0,1)$.
Moreover, since $K$ is algebraically closed, so is $F$.
As in \eqref{eqt:BA-kk}, one has
$$\BA_F^1 = \big\{\zeta_{x, \tau}: (x,\tau)\in F\times [0,1)\cup \{0_F\}\times [1,\infty)\big\}.$$

\smallskip\noindent
(a) Suppose on the contrary that there is a net $\{(x_i, \tau_i)\}_{i\in \KI}$ in $F\times [0,1) \cup \{0_F\}\times [1,\infty)$ with $\zeta_{x_i, \tau_i}\to \lambda^{\omega}$ for some $(\lambda, \omega)\in \BA_K^1\times [1,\infty)$.
	Then, $0  = |Q(a_0)|_{\zeta_{x_i, \tau_i}} \to |a_0|^{\omega}$, which is absurd.
	
	\smallskip\noindent
	(b) As said in the paragraph preceding Lemma \ref{lem:top-type-I-norm<1}, the map $\Lambda$ is a continuous bijection.
	If $\{(\lambda_i, \omega_i)\}_{i\in \KI}$ is a net in $\BA_K^1\times [1, \infty)$ satisfying $\lambda_i^{\omega_i} \to \lambda_0^{\omega_0}$ for some $(\lambda_0, \omega_0)\in \BA_K^1\times [1, \infty)$, then  $$|a_0|^{\omega_i/\omega_0} = |a_0|_{\lambda_i^{\omega_i}}^{1/\omega_0}\to |a_0|$$ and so $\omega_i \to \omega_0$.
	From this, one can also deduce that $|\bp|_{\lambda_i} \to |\bp|_{\lambda_0}$ for every $\bp\in R[\bt]$.
Consequently, the inverse of $\Lambda$ is continuous.
	
	\smallskip\noindent
	(c) $\Rightarrow)$.
Suppose that $\{\lambda_{i}^{\omega_i}\}_{i\in \KI}$ converges to $\zeta_{Q(b), \tau_1}$ for some $b\in R$ and $\tau_1\in [0,1)$.
	Then we have $\omega_i \to \infty$ (by Lemma \ref{lem:top-type-I-norm<1}(a)) and $|\bt -b|_{\lambda_i}^{\omega_i} \to |\bt - Q(b)|_{\zeta_{Q(b), \tau_1}} = \tau_1$.
	This verifies Condition (C1).
	
	On the other hand, suppose that $\{\lambda_{i}^{\omega_i}\}_{i\in \KI}$ converges to $\zeta_{0_F, \kappa}$ for some $\kappa\in [1,\infty)$.
	Again, one has $\omega_i\to \infty$ because of Lemma \ref{lem:top-type-I-norm<1}(a).
	Moreover, as $\kappa\geq 1$, one knows that
	$$|\bt - c|_{\lambda_i}^{\omega_i} \to \left|\bt - Q(c)\right|_{\zeta_{0_F, \kappa}} = \kappa \qquad (c\in R).$$
Hence, either Conditions (C2) or (C3) holds (depending on whether $\kappa =1$).

\smallskip\noindent
$\Leftarrow)$.
Suppose that $\omega_i \to \infty$ and Condition (C1) holds.
	Then $|\bt|_{\lambda_i -b}^{\omega_i} = |\bt - b|_{\lambda_i}^{\omega_i} \to \tau_1 < 1$ (see \eqref{eqt:def-lambda+s}), which implies that $|\bt|_{\lambda_i -b} < 1$ eventually.
By Lemma \ref{lem:top-type-I-norm<1}(b), we know that $(\lambda_i -b)^{\omega_i} \to \zeta_{0_F, \tau_1}$ and hence $\lambda_i^{\omega_i} \to \zeta_{Q(b), \tau_1}$.
	
Secondly, suppose that $\omega_i \to \infty$ and Condition (C2) holds.
We first note that
	\begin{equation}\label{eqt:limsup-leq-1}
	{\limsup}_{i\in \KI} |\bt|_{\lambda_i} \leq 1,
	\end{equation}
	because otherwise, one can find a subnet such that  $|\bt|_{\lambda_{i_j}}^{\omega_{i_j}}\to \infty$.
Let us also show that 
\begin{equation}\label{eqt:all-elem-R}
|\bt - c'|_{\lambda_i}^{\omega_i}\to \tau_2 \qquad (c'\in R).
\end{equation}
In fact, as $\tau_2 > 1$, when $i$ is large, one has $|\bt|_{\lambda_i} > 1$, and hence $|\bt-c'|_{\lambda_i} = |\bt|_{\lambda_i}$ ($c'\in R$) and Relation \eqref{eqt:all-elem-R} follows.

Now, for any $\bq\in \ker \ti Q$, we let $a_0,\dots,a_n\in \ker Q$ be the elements with $\bq = \sum_{k=0}^n a_k \bt^k$.
As $\kappa:=\max\{|a_0|,\dots,|a_n|\} < 1$, we obtain from  \eqref{eqt:limsup-leq-1} an element $i_0\in \KI$ satisfying
	$\sup_{i\geq i_0}|\bt|_{\lambda_i} < 1/\kappa^{\frac{1}{n+1}}$ and hence for $i\geq i_0$,
$$|\bq|_{\lambda_i} \leq \max_{k=0,\dots,n} |a_k||\bt|_{\lambda_i}^k  < \kappa^{\frac{1}{n+1}}.$$
This means that $|\bq|_{\lambda_i^{\omega_i}} \to 0$.
Consider now a polynomial $\bp\in R[\bt]$ of degree $m$.
Let $\{y_1,\dots, y_m\}$ be all the roots of $\ti Q(\bp)$ in $F$ (counting multiplicity) and let $c_1,\dots,c_m$ be elements in $R$ satisfying $Q(c_l) = y_l$ ($l=1,\dots,m$).
Then one can find $\bq_0\in \ker \ti Q$ with
$$\bp = (\bt - c_1)\cdots (\bt - c_m) + \bq_0.$$
Thus, \eqref{eqt:all-elem-R} and the above imply that $|\bp|_{{\lambda_i^{\omega_i}}} \to \tau_2^m = \zeta_{0_F,\tau_2}(\ti Q(\bp))$ (notice that $\tau_2 \geq 1$).

Finally, we consider the case when $\omega_i \to \infty$ and Condition (C3) holds.
As $0_R\in \ti F$, we know that $|\bt|_{\lambda_i}^{\omega_i}\to 1$ and Relation \eqref{eqt:limsup-leq-1} is satisfied.
Moreover, we have $|\bt - c'|_{\lambda_i}^{\omega_i}\to 1$ $(c'\in R)$.
Indeed, if $c'\in R$, one can find $c\in \ti F$ satisfying $|c'-c|<1$.
As $\omega_i \to \infty$, there exists $i_0\in \KI$ such that for any $i\geq i_0$, one has both $|c'-c|^{\omega_i} < 1/2$ as well as
$|\bt - c|_{\lambda_i}^{\omega_i}> 1/2$, and hence
$$|\bt - c'|_{\lambda_i} = |\bt - c|_{\lambda_i} \qquad (i\geq i_0).$$
This implies that $|\bt - c'|_{\lambda_i}^{\omega_i} \to 1$ as required.
Now, it follows from the same argument as that for Condition (C2) that $|\bp|_{{\lambda_i^{\omega_i}}} \to 1 = \zeta_{0_F,1}(\ti Q(\bp))$, for any $\bp\in R[\bt]$.

\smallskip\noindent
(d) If $\tau_1 < 1$, then it follows easily from part (c) that $\zeta_{b, \tau_1^{1/n}}^n \to \zeta_{Q(b), \tau_1}$ for every $b\in R$.
If $\tau_2 > 1$, it is not hard to see from part (c) that $\zeta_{0_K, \tau_2^{1/n}}^n \to \zeta_{0_F, \tau_2}$.
Furthermore, since $\zeta_{0_K, 1}(\bt - c) = 1$ for all $c\in R$, we know from part (c) that $\zeta_{0_K, 1}^n \to \zeta_{0_F, 1}$.
These establish the first statement.
The second statement follows from the first one, part (b) as well as the fact that $K$ is dense in $\BA_K^1$.
\end{proof}

\begin{quotation}
\emph{From now on, we will also identify $\BA_K^1\times [1,\infty)$ with $\bigcup_{\omega\in [1,\infty)} J_\omega^\BA(\BA_{K^\omega}^1)$ (i.e. $(\lambda, \omega)$ is identified with $\lambda^\omega$) as well as $K\times [1,\infty)$ with its image in $\bigcup_{\omega\in [1,\infty)} J_\omega^\BA(\BA_{K^\omega}^1)$ (i.e. $(r, \omega)$ is identified with $\zeta_{r,0}^\omega$).} 
\end{quotation}

\begin{rem}\label{rem:conv}
(a) In the proof of Theorem \ref{thm:AAS-of-val-ring}(c), we see that $\lambda_i^{\omega_i} \to \zeta_{0_F,\kappa}$ for some $\kappa\in [1,\infty)$ if and only if
$\omega_i \to \infty$ and $|\bt - c'|_{\lambda_i}^{\omega_i}\to \kappa$, for all $c'\in R$.

\smallskip\noindent
(b) Unlike the case of $\tau_2 \in (1, \infty)$, the requirement $|\bt|_{\lambda_i}^{\omega_i}\to 1$ does not imply
$\lambda_i^{\omega_i} \to \zeta_{0_F,1}$.
For example, if we set $\lambda_i := \zeta_{1_K,0}$ ($i\in \KI$), then $|\bt|_{\lambda_i}^{\omega_i} =  1$ but $|\bt - 1|_{\lambda_i}^{\omega_i}=0$ for all $i\in \KI$.

\smallskip\noindent
(c) Consider the uniform structure on $K\times (1,\infty)$ that is induced by the fundamental system of entourages of the form:
$$\big\{ \big((s_1,\omega_1), (s_2, \omega_2)\big): \big| |\bp(s_1)|^{\omega_1} - |\bp(s_2)|^{\omega_2}\big| < \epsilon, \text{ for each } \bp\in X\big\},$$
where $\epsilon$ runs through all positive numbers and $X$ runs through all non-empty finite subsets of $R[\bt]$.
As a uniform space, $\BA_R^1$ is the completion of $K\times (1,\infty)$ under this structure (because of Theorem \ref{thm:AAS-of-val-ring}(d)).
\end{rem}

Before continuing our discussion, let us set some more notations.
For any $\bp\in R[\bt]$ as well as $\tau, \epsilon\in \RP$, we denote
$$U^\bp_{\tau, \epsilon} := \{\mu\in \BA_R^1: \tau - \epsilon < \mu(\bp) < \tau + \epsilon\}.$$
It follows from the definition that $U^\bp_{\tau, \epsilon}$ an open subset of $\BA_R^1$.

% changed in the presentation of those sentences
Proposition \ref{prop:ele-AAS-val-ring} as well as parts (a) and (b) of Theorem \ref{thm:AAS-of-val-ring} tell us that $\BA_R^1$ contains the product space $\BA_K^1\times [1,\infty)$ as an open subset with its complement being $\BA_F^1$.
Moreover, 
by Lemma \ref{lem:top-type-I-norm<1}(a), we know that for every $\kappa \geq 1$, the subset $\BA_K^1\times [1,\kappa]$ is closed in $\BA_R^1$.

Since the topology on the product space $\BA_K^1\times [1,\infty)$ is well-known %
and Proposition \ref{prop:1-D-aff-anal-sp-tri-val} describes the topological space $\BA_F^1$, the topology of $\BA_R^1$ can be determined if one knows the description of neighborhood bases over elements in $\BA_F^1$. %
Through Theorem \ref{thm:AAS-of-val-ring}, these bases will be described as follows:

\begin{enumerate}[N1)]
\item Consider $\tau\in [0,1)$ and $b\in \ti F$ (see Theorem \ref{thm:AAS-of-val-ring}).
If $\kappa \geq 1$ and $0 < \epsilon < 1-\tau$, one can check easily, %
using Relation \eqref{eqt:BA-kk}, that
\begin{align*}
& U^{\bt - b}_{\tau ,\epsilon}\setminus \BA_K^1\times [1,\kappa] =  \\
& \  \left\{(\lambda, \omega)\in \BA_K^1\times [1,\infty): \omega > \kappa ; \big| |\bt - b|_\lambda^\omega - \tau\big| < \epsilon\right\}\cup  \big\{\zeta_{Q(b), \upsilon}: \upsilon\in [0,1); |\upsilon - \tau| < \epsilon\big\}.
\end{align*}
Thus, by Theorem \ref{thm:AAS-of-val-ring}(c) and Proposition \ref{prop:1-D-aff-anal-sp-tri-val}, the countable collection:
$$\big\{U^{\bt - b}_{\tau ,1/n}\setminus \BA_K^1\times [1,m]: m,n \in \BN \text{ such that } 1/n \in (0,1 - \tau)\big\}$$
constitutes an open neighborhood basis of $\zeta_{Q(b), \tau}$.

\item Consider $\tau > 1$.
If $\kappa \geq 1$ and $0 < \epsilon < \tau -1$, one may verify that
\begin{align*}
& U^{\bt}_{\tau ,\epsilon}\setminus \BA_K^1\times [1,\kappa] =  \\
& \ \left\{(\lambda, \omega)\in \BA_K^1\times [1,\infty): \omega > \kappa ; \big| |\bt|_\lambda^\omega - \tau\big| < \epsilon\right\}\cup  \big\{\zeta_{0_F, \upsilon}: \tau - \epsilon < \upsilon < \tau + \epsilon\big\}.
\end{align*}
Again, Theorem \ref{thm:AAS-of-val-ring}(c) and Proposition \ref{prop:1-D-aff-anal-sp-tri-val} imply that the countable collection:
$$\big\{U^{\bt}_{\tau ,1/n}\setminus \BA_K^1\times [1,m]: m,n \in \BN \text{ such that } 1/n < \tau - 1 \big\}$$
constitutes an open neighborhood basis of $\zeta_{0_F, \tau}$.

\item Let $\CF$ be the collection of all finite subsets of $\ti F$ such that each of them contains $0_R$.
If $X\in \CF$, $\kappa \geq 1$ and $\epsilon \in (0, 1)$, it is not hard to see that
\begin{align*}
\bigcap_{c\in X} &  U^{\bt - c}_{1 ,\epsilon}\setminus \BA_K^1\times [1,\kappa] = \\
&  \left\{(\lambda, \omega)\in \BA_K^1\times [1,\infty): \omega > \kappa ; \big| |\bt - c|_\lambda^\omega - 1\big| < \epsilon, \text{ for any }c\in X\right\} \cup  \\
& \ \big\{\zeta_{Q(c), \upsilon}: c\in X; \upsilon\in (1-\epsilon,1)\big\}  \cup \big\{\zeta_{0_F, \upsilon}: \upsilon\in [1, 1+\epsilon)\big\} \cup \big\{\zeta_{Q(b), \upsilon}: b\in \ti F\setminus X; \upsilon\in [0,1)\big\} .
\end{align*}
It now follows from Theorem \ref{thm:AAS-of-val-ring}(c) and Proposition \ref{prop:1-D-aff-anal-sp-tri-val} that the collection:
$$\left\{\bigcap_{c\in X} U^{\bt - c}_{1 ,1/n}\setminus \BA_K^1\times [1,m]: m,n \in \BN; X\in \CF \right\}$$
constitutes an open neighborhood basis of $\zeta_{0_F, 1}$.
\end{enumerate}

One may use the above information to obtain certain topological properties of $\BA_R^1$.
The following is an illustration.

\begin{thm}\label{thm:connect}
Let $R$ be a complete valuation ring which is not a field, with its field of fractions being algebraically closed.
The following properties hold. 

\smallskip\noindent
(a) $\BA_R^1$ is path connected.

\smallskip\noindent
(b) $\BA_R^1$ is locally path connected.

\smallskip\noindent
(c) $\BA_R^1$ is first countable if and only if $F$ is countable and $\BA_K^1$ is first countable.
\end{thm}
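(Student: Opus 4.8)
The plan is to treat the three assertions of Theorem~\ref{thm:connect} in order, using the explicit description of $\BA_R^1$ furnished by Proposition~\ref{prop:ele-AAS-val-ring} and Theorem~\ref{thm:AAS-of-val-ring}, together with the neighborhood bases (N1)--(N3) already spelled out above. Throughout I regard $\BA_K^1\times[1,\infty)$ as the open dense subset of $\BA_R^1$ with complement $\BA_F^1$ (Theorem~\ref{thm:AAS-of-val-ring}(b),(d)), and I use that $\BA_F^1$ carries the topology of Proposition~\ref{prop:1-D-aff-anal-sp-tri-val}, namely that of a ``hedgehog'' of segments $\{\zeta_{x,\tau}:\tau\in[0,1)\}$ (indexed by $x\in F$) all attached at the Gauss point $\zeta_{0_F,1}$, which in turn is the endpoint of the ray $\{\zeta_{0_F,\tau}:\tau\in[1,\infty)\}$.

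\emph{Path connectedness (a).} Since $\BA_K^1\times[1,\infty)$ is path connected (being a product of the path connected space $\BA_K^1$ with an interval) and is dense with complement $\BA_F^1$, it suffices to connect each point of $\BA_F^1$ to a point of $\BA_K^1\times[1,\infty)$ by a path lying in $\BA_R^1$. The key observation, already extracted in the proof of Theorem~\ref{thm:AAS-of-val-ring}(d), is that for $\tau_1<1$ and $b\in R$ the assignment $\omega\mapsto \zeta_{b,\tau_1^{1/\omega}}^\omega$ (for $\omega\in[1,\infty)$) together with the limit point $\zeta_{Q(b),\tau_1}$ at $\omega=\infty$ defines a path; continuity at the endpoint is exactly Theorem~\ref{thm:AAS-of-val-ring}(c), Condition~(C1). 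Similarly $\omega\mapsto\zeta_{0_K,\tau_2^{1/\omega}}^\omega$ limits to $\zeta_{0_F,\tau_2}$ for $\tau_2>1$ via (C2), and $\omega\mapsto\zeta_{0_K,1}^\omega$ limits to $\zeta_{0_F,1}$ via (C3). So every point of $\BA_F^1$ is the endpoint of a half-open path whose other part lies in $\BA_K^1\times[1,\infty)$; composing with a path inside $\BA_K^1\times[1,\infty)$ gives path connectedness.

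\emph{Local path connectedness (b).} At points of the open set $\BA_K^1\times[1,\infty)$ this is inherited from local path connectedness of $\BA_K^1$ (discussed in the Introduction) times an interval. At a point $\zeta\in\BA_F^1$ I must show the basic neighborhoods listed in (N1)--(N3) can be taken path connected, or at least contain path connected neighborhoods. For (N1) and (N2) the basic set decomposes as a ``tube'' $\{(\lambda,\omega):\omega>\kappa,\ ||\bt-b|_\lambda^\omega-\tau|<\epsilon\}$ glued along its frontier to an open subsegment of $\BA_F^1$; I would shrink to a path connected piece of the $\BA_K^1$-factor (possible by local path connectedness of $\BA_K^1$ and the fact that the fibre-condition on $|\bt-b|_\lambda^\omega$ is an interval condition compatible with the $\BR$-tree structure) and then run, for each $\lambda$ in that piece, the path $\omega\mapsto\zeta_{b,\upsilon^{1/\omega}}$-type contraction toward the $\BA_F^1$-endpoint as in part (a); this exhibits a deformation retraction of a smaller neighborhood onto a subsegment of $\BA_F^1$, hence path connectedness. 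Case (N3), the Gauss point $\zeta_{0_F,1}$, is the genuinely delicate one: the basic neighborhood $\bigcap_{c\in X}U^{\bt-c}_{1,\epsilon}\setminus\BA_K^1\times[1,m]$ meets $\BA_F^1$ in a union over $c\in X$ of small subsegments near $\zeta_{0_F,1}$, the short ray $\{\zeta_{0_F,\upsilon}:\upsilon\in[1,1+\epsilon)\}$, \emph{and} all the full open segments $\{\zeta_{Q(b),\upsilon}:b\in\ti F\setminus X,\ \upsilon\in[0,1)\}$ attached at $\zeta_{0_F,1}$ --- so the ``ball'' is a big hedgehog, not a single segment. The argument here is that each of these pieces, and the tube part, can be joined to $\zeta_{0_F,1}$ itself by a path staying inside the neighborhood: for the segments indexed by $b\notin X$ one uses that $|\bt-c|_\lambda<1$-type estimates force the relevant seminorms to stay in the neighborhood along the contraction $\upsilon\mapsto\zeta_{Q(b),\upsilon}\rightsquigarrow\zeta_{0_F,1}$ (the defining inequalities $||\bt-c|^\omega-1|<\epsilon$ are automatically satisfied on the whole segment because $|\bt-c|_{\zeta_{Q(b),\upsilon}}=1$ when $Q(b)\ne Q(c)$). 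Hence the whole neighborhood is a star-shaped-at-$\zeta_{0_F,1}$ union of paths, so path connected.

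\emph{First countability (c).} One direction is immediate from (N1)--(N3): if $F$ is countable and $\BA_K^1$ is first countable, then $\BA_K^1\times[1,\infty)$ is first countable, and at each point of $\BA_F^1$ the displayed countable collections (note in (N3) that $\CF$ is countable precisely because $\ti F\cong F$ is countable) are neighborhood bases, so $\BA_R^1$ is first countable. For the converse, suppose $\BA_R^1$ is first countable. Then its subspace $\BA_K^1\times[1,\infty)$ is first countable, hence so is $\BA_K^1$. To see $F$ must be countable, consider the Gauss point $\zeta_{0_F,1}$: if $\{W_j\}_{j\in\BN}$ were a countable neighborhood basis there, each $W_j$ contains some basic set of type (N3) indexed by a finite $X_j\in\CF$; put $X_\infty:=\bigcup_j X_j$, a countable subset of $\ti F$. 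If $F$ were uncountable, pick $b\in\ti F$ with $Q(b)\notin Q(X_\infty)$ and consider the open set $V:=\BA_R^1\setminus\overline{\{\zeta_{Q(b),\upsilon}:\upsilon\in[0,1/2]\}}$ (a neighborhood of $\zeta_{0_F,1}$, using that the segment at $Q(b)$ is closed and disjoint from the Gauss point); no $W_j$ lies inside $V$, since each $W_j\supseteq$ a type-(N3) set with $b\notin X_j$, which by the description in (N3) contains the entire segment $\{\zeta_{Q(b),\upsilon}:\upsilon\in[0,1)\}$, contradicting the basis property. Hence $F$ is countable.

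\emph{Main obstacle.} The routine parts (the interval-type contractions, the easy half of (c)) go through mechanically; the real work is the local path connectedness at the Gauss point $\zeta_{0_F,1}$ in (N3), where the basic neighborhood is an infinite hedgehog of full segments together with a tube over an open subset of $\BA_K^1$, and one must check carefully --- tracking the inequalities $||\bt-c|_\lambda^\omega-1|<\epsilon$ simultaneously over the finite set $X$ --- that the natural contractions toward $\zeta_{0_F,1}$ do not leave the neighborhood. I expect this bookkeeping, rather than any conceptual difficulty, to be the crux.
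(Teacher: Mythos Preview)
Your proposal is correct and follows essentially the same strategy as the paper: decompose $\BA_R^1$ as the open piece $\BA_K^1\times[1,\infty)$ glued to the closed piece $\BA_F^1$, use the explicit paths $\omega\mapsto\zeta_{b,\tau^{1/\omega}}^\omega$ (and their variants) to bridge the two pieces for (a), verify that the basic neighborhoods (N1)--(N3) are path connected for (b), and read off (c) from the shape of (N3).

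Two points of comparison are worth noting. In part (b) the paper does not shrink to smaller neighborhoods or invoke a deformation retraction; it shows directly that each basic set in (N1)--(N3) is already path connected, by taking an arbitrary point $\lambda_0^{\omega_0}$ in the ``tube'', using density of $K$ in $\BA_K^1$ to reduce to $\lambda_0=\zeta_{s_0,0}$, and then writing down an explicit concatenation of arcs (first $\upsilon\mapsto\zeta_{s_0,\upsilon}^{\omega_0}$, then along $\zeta_{b,\cdot}^{\omega_0}$ or $\zeta_{0_K,\cdot}^{\omega_0}$, and finally $\omega\mapsto\zeta_{b,\tau^{1/\omega}}^\omega$) staying inside the neighborhood. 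Your contraction picture is morally the same, but when you come to fill in details you will find you need exactly these intermediate arcs, since the tube does not split as a product over $\BA_K^1$. In part (c), your converse argument differs from the paper's: you exhibit an open neighborhood $V$ of $\zeta_{0_F,1}$ (the complement of a closed half-segment at some $Q(b)\notin Q(X_\infty)$) that no $W_j$ can sit inside, whereas the paper instead builds a sequence $(\zeta_{c_0,0},n)$ with $c_0\in\ti F\setminus\bigcup_k X_k$ which satisfies every (N3)-condition coming from the $X_k$ yet fails $|\bt-c_0|^{\omega_n}\to 1$, contradicting convergence to $\zeta_{0_F,1}$. Both arguments work; yours is a clean point-set formulation of the same obstruction.
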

\begin{proof}
(a) By Proposition \ref{prop:1-D-aff-anal-sp-tri-val}, $\BA_F^1$ is path connected.
Moreover, as said in the Introduction, $\BA_K^1$ is path connected, and this gives the path connectedness of $\BA_K^1\times [1,\infty)$.
Consider $(x_0, \tau_0)\in F\times [0,1)\cup \{0_F\}\times [1,\infty)$ and $(\lambda_0,\omega_0)\in \BA_K^1\times [1,\infty)$.
As in the proof of Theorem \ref{thm:AAS-of-val-ring}(d), one has $\zeta_{0_K,0}^\omega \to \zeta_{0_F,0}$ (when $\omega\to \infty$), and this produces a path joining $\zeta_{0_K,0}$ to  $\zeta_{0_F,0}$.
By considering a path in $\BA_F^1$ (respectively, $\BA_K^1\times [1,\infty)$) joining $\zeta_{x_0,\tau_0}$ to $\zeta_{0_F,0}$ (respectively, joining  $\lambda_0^{\omega_0}$ to $\zeta_{0_K,0}$), one obtains a path that joins $\zeta_{x_0,\tau_0}$ to $\lambda_0^{\omega_0}$.

\smallskip\noindent
(b) As recalled in the Introduction, $\BA_K^1$ is locally path connected, and hence so is the open subset $\BA_K^1\times [1,\infty)$ of $\BA_R^1$.

Let us now verify the path connectedness of open sets of the form as in (N1). 
Let $\tau\in [0,1)$ and $b\in \ti F$.
Fix $\kappa \geq 1$ as well as $\epsilon \in (0, 1-\tau)$. 
For simplicity, we denote %
$$V^\kappa := \BA_K^1\times (\kappa,\infty).$$
It is easy to see that $\big\{\zeta_{Q(b), \upsilon}: \upsilon\in [0,1); |\upsilon - \tau| < \epsilon\big\}$ is path connected (see Proposition \ref{prop:1-D-aff-anal-sp-tri-val}).
Thus, in order to verify the path connectedness of $U^{\bt - b}_{\tau ,\epsilon}\setminus \BA_K^1\times [1,\kappa]$, it suffices to show that an arbitrary fixed element $\lambda_0^{\omega_0} \in U^{\bt - b}_{\tau ,\epsilon}\cap V^\kappa$ can be joined to $\zeta_{Q(b),\tau}$ through a path inside $U^{\bt - b}_{\tau ,\epsilon}\setminus \BA_K^1\times [1,\kappa]$.

Notice that the subset
$$\left\{\lambda\in \BA_K^1: \big| |\bt - b|_\lambda^{\omega_0} - \tau\big| < \epsilon\right\}$$
is open in $\BA_K^1$ and contains $\lambda_0^{\omega_0}$.
Hence, by the locally path connectedness of $\BA_K^1$ and the density of
the image of $K$ in $\BA_K^1$, one may assume that $\lambda_0 = \zeta_{s_0, 0}$ for an element $s_0\in K$.
The requirement of $\zeta_{s_0,0}^{\omega_0}\in U^{\bt - b}_{\tau ,\epsilon}$ implies that
$$\tau - \epsilon < |s_0-b|^{\omega_0} < \tau + \epsilon.$$
For every $\upsilon \in \big[0, |s_0-b|\big]$, the relation
\begin{equation*}\label{eqt:bt-b-s0-upsilon}
|\bt - b|_{\zeta_{s_0, \upsilon}}^{\omega_0}  = \max \{ \upsilon, |s_0-b|\}^{\omega_0} = |s_0-b|^{\omega_0}\in (\tau - \epsilon, \tau+ \epsilon)
\end{equation*}
tells us that $\zeta_{s_0, \upsilon}^{\omega_0}\in U^{\bt - b}_{\tau ,\epsilon}$.
Thus, $\big\{\zeta_{s_0, \upsilon}^{\omega_0}: \upsilon\in [0, |s_0-b|]\big\}$ is a path in $U^{\bt - b}_{\tau ,\epsilon}$ joining $\zeta_{s_0,0}^{\omega_0}$ to $\zeta_{s_0, |s_0-b|}^{\omega_0} = \zeta_{b, |s_0-b|}^{\omega_0}$.

Similarly, the path
$\big\{\zeta_{b, \upsilon}^{\omega_0}: \upsilon \text{ is in between } |s_0-b| \text{ and } \tau^{1/\omega_0}\big\}$
that joins $\zeta_{b, |s_0-b|}^{\omega_0}$ to $\zeta_{b, \tau^{1/\omega_0}}^{\omega_0}$ also lies inside $U^{\bt - b}_{\tau ,\epsilon}$.
Moreover, it follows from
\begin{equation}\label{eqt:path}
|\bt - b|_{\zeta_{b, \tau^{1/\omega}}}^\omega  = \tau
\end{equation}
and Theorem \ref{thm:AAS-of-val-ring}(c) that the net $\{\zeta_{b, \tau^{1/\omega}}^{\omega}\}_{\omega\geq \omega_0}$ converges to $\zeta_{Q(b),\tau}$ when $\omega\to \infty$.
Therefore,
$$\big\{\zeta_{b, \tau^{1/\omega}}^{\omega}: \omega\geq \omega_0\big\}\cup \big\{\zeta_{Q(b),\tau}\big\}$$
is a path in $\BA_K^1$ joining $\zeta_{b, \tau^{1/\omega_0}}^{\omega_0}$ to $\zeta_{Q(b),\tau}$.
Furthermore, \eqref{eqt:path} also ensures that this path lies inside $U^{\bt - b}_{\tau ,\epsilon}$.
Consequently, $U^{\bt - b}_{\tau ,\epsilon}\setminus \BA_K^1\times [1,\kappa]$ is path connected (note that $\omega_0$ as well as all the $\omega$ in the above are strictly bigger than $\kappa$).

In the same way, for any fixed $\tau\in (1,\infty)$, one can establish the path connectedness of open sets as in (N2), i.e., $U^{\bt}_{\tau ,\epsilon}\setminus \BA_K^1\times [1,\kappa]$ for $\kappa \geq 1$ and $\epsilon \in (0, \tau - 1)$.

It remains to show that open sets of the form as in (N3), namely, the set $\bigcap_{c\in X} U^{\bt - c}_{1 ,\epsilon}\setminus \BA_K^1\times [1,\kappa]$ (for $X\in \CF$, $\kappa \geq 1$ and $\epsilon\in (0,1)$) is path connected.
As in the above, we need to find a path in $\bigcap_{c\in X} U^{\bt - c}_{1 ,\epsilon}\setminus \BA_K^1\times [1,\kappa]$ that joins an arbitrary chosen element $\lambda_0^{\omega_0}\in \bigcap_{c\in X} U^{\bt - c}_{1 ,\epsilon}\cap V^\kappa$ to $\zeta_{0_F,1}$.
Again, we may assume that $\lambda_0 = \zeta_{s_0,0}$ for an element $s_0\in K$.
The condition $\zeta_{s_0,0}^{\omega_0}\in \bigcap_{c\in X} U^{\bt - c}_{1 ,\epsilon}$ implies that
\begin{equation*}\label{eqt:s0-c}
(1 - \epsilon)^{1/\omega_0} < |s_0-c| < (1 + \epsilon)^{1/\omega_0} \qquad (c\in X),
\end{equation*}
and, in particular, $|s_0|^{\omega_0}\in (1 - \epsilon, 1 + \epsilon)$ (because $0_R\in X$ by the assumption of $\CF$).
For every $\upsilon\in [0,|s_0|]$, the equality
\begin{equation}\label{eqt:bt-c-s0-upsilon}
|\bt - c|_{\zeta_{s_0, \upsilon}}^{\omega_0}  = \max \{ \upsilon, |s_0-c|\}^{\omega_0} \qquad (c\in X)
\end{equation}
will then ensure that $\zeta_{s_0,\upsilon}^{\omega_0}\in \bigcap_{c\in X} U^{\bt - c}_{1 ,\epsilon}$.
In addition, a similar relation as \eqref{eqt:bt-c-s0-upsilon} tells us that the path $\big\{\zeta_{0_K, \upsilon}^{\omega_0}: \upsilon \text{ is in between } |s_0| \text{ and } 1\big\}$ lies inside $\bigcap_{c\in X} U^{\bt - c}_{1 ,\epsilon}$.
On the other hand, using Theorem \ref{thm:AAS-of-val-ring}(c), we see that $\{\zeta_{0_K, 1}^{\omega}\}_{\omega\geq \omega_0}$ converges to $\zeta_{0_F,1}$ when $\omega\to \infty$, and this produces a path joining $\zeta_{0_K, 1}^{\omega_0}$ to $\zeta_{0_F,1}$. 
Moreover, the equalities $\zeta_{0_K, 1}(\bt -c) = 1$ ($c\in X$) gives $\zeta_{0_K, 1}^{\omega}\in \bigcap_{c\in X} U^{\bt - c}_{1 ,\epsilon}$ ($\omega\geq \omega_0$).
Consequently, we obtain a path in $\bigcap_{c\in X} U^{\bt - c}_{1 ,\epsilon}\setminus \BA_K^1\times [1,\kappa]$ joining $\lambda_0^{\omega_0}$ to $\zeta_{0_F,1}$.

\smallskip\noindent
(c) Clearly, the countability of $F$ and the first countability of $\BA_K^1$ will imply the first countability of $\BA_R^1$. 
Conversely, suppose that $\BA_R^1$ is first countable.
Then $\BA_K^1 \times [1,\infty)$ is first countable and so is $\BA_K^1$.
Moreover, there is a countable subcollection
$$\left\{ \bigcap_{c\in X_k} U^{\bt - c}_{1 ,\frac{1}{n_k}}\setminus \BA_K^1\times [1,{m_k}]: k\in \BN\right\}$$
that form a neighborhood basis for $\zeta_{0_F,1}$ in $\BA_R^1$.
Let $C:= \bigcup_{k\in \BN} X_k$.
We know that whenever a net $\{(\lambda_i, \omega_i)\}_{i\in \KI}$ in $\BA_K^1\times [1,\infty)$ satisfying $\omega_i \to \infty$ as well as $|\bt - b|_{\lambda_i}^{\omega_i} \to 1$ ($b\in C$), we have $\lambda_i^{\omega_i} \to \zeta_{0_F,1}$.
Assume on the contrary that $\ti F$ is uncountable, then there exists $c_0\in \ti F\setminus C$ (which implies $|c_0-b| = 1$ for all $b\in C$).
If we set $\omega_n := n$ and $\lambda_n := \zeta_{c_0,0}$ ($n\in \BN$), then $|\bt - b|_{\lambda_n}^{\omega_n} = 1$ ($n\in \BN$), for each $b\in C$, but $|\bt - c_0|_{\lambda_n}^{\omega_n} = 0$ ($n\in \BN$), which contradicts $\lambda_n^{\omega_n} \to \zeta_{0_F,1}$.
\end{proof}

\begin{rem}\label{rem:open-connected-subset}
Unlike $\BA_K^1$, the topological space $\BA_R^1$ is far from having a $\BR$-tree structure. 
Actually, for any connected open subset $V\subseteq \BA_R^1$ and any $\lambda_1, \lambda_2\in V$, there exist infinitely many paths in $V$ joining $\lambda_1$ and $\lambda_2$. 

In fact, consider $i\in \{1,2\}$. 
Let $U_i\subseteq V$ be a path connected open neighborhood of $\lambda_i$ (see Theorem \ref{thm:connect}(b)) and fix any $(\mu_i, \kappa_i)\in \BA_K^1\times (1, \infty)\cap U_i$ (see Theorem \ref{thm:AAS-of-val-ring}). 
Thus, $\lambda_i$ is joined to $(\mu_i, \kappa_i)$ through a path inside $U_i$. 
Choose a connected open neighborhood $W_i$ of $\mu_i$ in $\BA_K^1$ as well as a number $\epsilon \in (0,1-\kappa_i)$ such that $W_i\times (\kappa_i - \epsilon, \kappa_i + \epsilon) \subseteq U_i$. 
Pick any $\nu_i\in W_i$. 
There exist infinitely many paths in $W_i\times (\kappa_i - \epsilon, \kappa_i + \epsilon)$ joining $(\mu_i, \kappa_i)$ to $(\nu_i, \kappa_i)$. 
As $V$ is path connected, there exists a path in $V$ joining $(\nu_1,\kappa_1)$ to $(\nu_2, \kappa_2)$. 
In this way, we obtain infinitely many paths joining $\lambda_1$ and $\lambda_2$. 
\end{rem}

Similar to Theorem \ref{thm:connect}(c), there is also a description of the second countability of $\BA_R^1$.
In fact, one has the following more general result.
This result could be a known fact, but since we do not find an explicit reference for it, we present its simple argument here.

\begin{prop}\label{prop:2nd-count}
If $(S, \|\cdot\|)$ is a commutative unital Banach ring with multiplicative norm, the following statements are equivalent.
\begin{enumerate}[S1).]
\item $\BA_S^1$ is second countable.
\item $S$ is separable as a metric space.
\item $\KM(S\{n^{-1}\bt\})$ is metrizable for every $n\in \BN$.
\end{enumerate}
\end{prop}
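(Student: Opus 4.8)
The plan is to prove the cycle of implications $(S2)\Rightarrow(S3)\Rightarrow(S1)\Rightarrow(S2)$. All three implications are essentially point-set topology; the only genuinely new ingredient is a concrete homeomorphic embedding of $S$ into $\BA_S^1$, which is needed for the last implication.

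For $(S2)\Rightarrow(S3)$, I would first observe that separability passes from $S$ to every $S\{n^{-1}\bt\}$: if $D$ is a countable dense subset of $S$, then the polynomials with coefficients in $D$ are dense in $S\{n^{-1}\bt\}$, since any element $\sum_k a_k\bt^k$ with $\|a_k\|n^k\to 0$ may first be truncated to a polynomial (the tail having small norm) and then have its finitely many coefficients approximated by members of $D$. I would then prove the general fact that $\KM(T)$ is metrizable for any separable commutative unital Banach ring $T$ with countable dense subset $D_0$: the reverse triangle inequality for a seminorm together with contractivity gives $\bigl||f|_\mu-|g|_\mu\bigr|\le|f-g|_\mu\le\|f-g\|$, so each $\mu\in\KM(T)$ is $1$-Lipschitz on $T$ and hence determined by its restriction to $D_0$; thus $\mu\mapsto(|d|_\mu)_{d\in D_0}$ is a continuous injection of the compact space $\KM(T)$ into the metrizable space $\prod_{d\in D_0}[0,\|d\|]$, and a continuous injection from a compact space into a Hausdorff space is a homeomorphism onto its image. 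Applying this with $T=S\{n^{-1}\bt\}$ yields $(S3)$.

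For $(S3)\Rightarrow(S1)$, I would use $\BA_S^1=\bigcup_{n\in\BN}\BB^S_n$, noting that $\BB^S_n=\{\mu\in\BA_S^1:|\bt|_\mu<n\}$ is open in $\BA_S^1$ (being the preimage of an open interval under the continuous evaluation $\mu\mapsto|\bt|_\mu$) and is contained in the compact set $\KM(S\{n^{-1}\bt\})$. A compact metrizable space is second countable, so the open subspace $\BB^S_n$ of $\KM(S\{n^{-1}\bt\})$ is second countable; fixing a countable base $\CB_n$ of $\BB^S_n$, whose members are then open in $\BA_S^1$, the countable family $\bigcup_{n\in\BN}\CB_n$ is a base for $\BA_S^1$.

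For $(S1)\Rightarrow(S2)$, I would realise $(S,\|\cdot\|)$ as a subspace of $\BA_S^1$. For $a\in S$ define $\eta_a$ on $S[\bt]$ by $|\bp|_{\eta_a}:=\|\bp(a)\|$; this is a multiplicative seminorm \emph{precisely because} the norm on $S$ is multiplicative, and its restriction to $S$ is $\|\cdot\|$, so $\eta_a\in\BA_S^1$. The map $a\mapsto\eta_a$ is injective (evaluate on $\bt-a$) and a homeomorphism onto its image: if $\eta_{a_i}\to\eta_a$ then $\|a_i-a\|=|\bt-a|_{\eta_{a_i}}\to 0$, while conversely $a_i\to a$ in $S$ forces $\bp(a_i)\to\bp(a)$ in $S$, hence $|\bp|_{\eta_{a_i}}\to|\bp|_{\eta_a}$, for every $\bp\in S[\bt]$ by continuity of the ring operations. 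Since second countability is inherited by subspaces and, for metric spaces, is equivalent to separability, $S$ is separable. The step requiring the most care is spotting the embedding $a\mapsto\eta_a$, and this is the only place where multiplicativity, rather than mere submultiplicativity, of the norm on $S$ is used. (One cannot instead deduce separability of $S$ by restricting seminorms from $S\{n^{-1}\bt\}$ to $S$: the restriction map $\KM(S\{n^{-1}\bt\})\to\KM(S)$ need not be surjective when the norm is not non-Archimedean, since the Gauss norm fails to be multiplicative already over $\BC$.)
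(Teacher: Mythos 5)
Your proof is correct, and it follows the same overall cycle $(S1)\Rightarrow(S2)\Rightarrow(S3)\Rightarrow(S1)$ as the paper, with the same key ingredients: the embedding $a\mapsto\eta_a$ (the paper's $\zeta_a$) for $(S1)\Rightarrow(S2)$, and the decomposition $\BA_S^1=\bigcup_n\BB_n^S$ with $\BB_n^S$ open inside the compact metrizable $\KM(S\{n^{-1}\bt\})$ for $(S3)\Rightarrow(S1)$. The one place where you genuinely diverge is the implication $(S2)\Rightarrow(S3)$. The paper works with the Berkovich uniform structure: it restricts the entourages $E^X_{1/k}$ to finite sets $X$ of polynomials with coefficients in a countable dense subset $S_0\subseteq S$, checks (using $|\bt|_\mu\leq n$) that these still form a fundamental system on $\KM(S\{n^{-1}\bt\})$, and then invokes the metrization theorem for uniform spaces with a countable base of entourages. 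You instead prove the cleaner general statement that $\KM(T)$ is metrizable for any separable Banach ring $T$, via the $1$-Lipschitz estimate $\bigl||f|_\mu-|g|_\mu\bigr|\leq\|f-g\|$ and the continuous injection $\mu\mapsto(|d|_\mu)_{d\in D_0}$ of the compact space $\KM(T)$ into the metrizable product $\prod_{d\in D_0}[0,\|d\|]$; you then feed in the (correctly verified) separability of $S\{n^{-1}\bt\}$. Your route avoids the uniform-space metrization theorem entirely and needs only the compact-to-Hausdorff homeomorphism lemma, at the cost of first establishing separability of $S\{n^{-1}\bt\}$; the two arguments are of comparable length, and yours is arguably more self-contained. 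Your closing parenthetical about the restriction map $\KM(S\{n^{-1}\bt\})\to\KM(S)$ is a side remark and does not affect the proof.
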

\begin{proof}
$(S1) \Rightarrow (S2)$.
For any $a\in S$, we defined $\zeta_a\in \BA_S^1$ by $\zeta_a(\bp):= \|\bp(a)\|$ ($\bp\in S[\bt]$).
It is easy to see that $a\mapsto \zeta_a$ is a homeomorphism from $S$ onto its image in $\BA_S^1$.
Thus, $S$ is also second countable and hence is separable.

\smallskip\noindent
$(S2) \Rightarrow (S3)$.
Suppose that $S$ contains a countable dense subset $S_0$.
We denote by $\CF_0$ the collection of non-empty finite subsets of $S[\bt]$ consisting of polynomials with coefficients in $S_0$.
For a fixed $n\in \BN$, since $|\bt|_{\mu}\leq n$ for any $\mu\in \KM(S\{n^{-1}\bt\})$, it is not hard to see that the countable family
$$\big\{E_{1/k}^X\cap \KM(S\{n^{-1}\bt\})\times \KM(S\{n^{-1}\bt\}): k\in \BN; X\in \CF_0\big\}$$
(see %
Relation \eqref{eqt:def-entour} for the meaning of $E_{1/k}^X$) forms a fundamental system of entourages for the Berkovich uniform structure on $\KM(S\{n^{-1}\bt\})$.
Consequently, the Berkovich uniform structure (and hence the topology defined by it) is pseudo-metrizable (see, e.g., Theorem 13 in chapter 6 of \cite{Ke}). 
However, since the topology on $\KM(S\{n^{-1}\bt\})$ is Hausdorff, we conclude that this topology is indeed metrizable.

\smallskip\noindent
$(S3) \Rightarrow (S1)$.
Since $\KM(S\{n^{-1}\bt\})$ is a compact metric space, it is second countable, and  so, %
the open subset $\BB^S_n \subseteq \KM(S\{n^{-1}\bt\})$ (see \eqref{eqt:def-open-n-ball}) is also second countable.
Consequently, $\BA_S^1$ is second countable.
\end{proof}

Clearly, the Banach ring $R$ is separable (equivalently, second countable) if and only if $K$ is separable as a metric space. 
Note also that if a complete valued field is algebraically closed and spherically complete, then it is not separable (see e.g. \cite[Remark 1.4]{HLR}). 
However, the completion of the algebraic closure of a separable complete valued field is again separable (see e.g. \cite[Remark 1.3]{HLR}).

\begin{eg}
Let $\kk$ be a field equipped with the trivial norm.
By Proposition \ref{prop:1-D-aff-anal-sp-tri-val}, one can define a metric on $\BA_\kk^1$ through the ``geodesic distance'' $d_G$ of two given points.
If $\kk$ is uncountable, then Proposition \ref{prop:2nd-count} tells us that $\BA_\kk^1$ is not metrizable, and hence the topology induced by $d_G$ is different from the pointwise convergence topology on $\BA_\kk^1$.

On the other hand, suppose that $\kk$ is at most countable and $\bp_1,\bp_2, \dots$ are all the elements in $\kk[\bt]_{\rm irr}$.
If we rescale the ``geodesic distance'' so that $d_G(\gamma_{\bp_k,0}, \gamma_{\bt,1}) \to 0$ when $k\to \infty$, then it is not hard to see that this metric defines the topology of pointwise convergence on $\BA_\kk^1$.
In this case, $\BA_\kk^1$ is the following closed subspace of $\BR^2$:
$$\begin{tikzpicture}[scale=2]

\draw (0,0) -- (1.2,0) node[above, black]{$\gamma_{\bt, \tau}$};

\draw (0,0) -- (2,0);

\draw[fill=black] (-0.01,0) circle(.002) node[left, black]{$\cdots$};

\draw[fill=black] (1.99,0) circle(.02) node[above, black]{$\gamma_{\bt, 1}$};

\draw (2,0) arc(180:320:1/3); 

\draw (2,0) arc(180:360:1/3);

\draw[fill=black] (2+2/3-0.01,0) circle(.02) node[left, black]{$\cdots$};

\draw (2,0) arc(180:320:1/2) node[below, black]{$\ \ \ \ \gamma_{\bp_2, \kappa}$};

\draw (2,0) arc(180:360:1/2);

\draw[fill=black] (2.99,0) circle(.02)  node[above, black]{$\gamma_{\bp_2, 0}$};

\draw (2,0) arc(180:360:1);

\draw (2,0) arc(180:320:1) node[below, black]{$\ \ \ \ \gamma_{\bp_1, \kappa}$};

\draw[fill=black] (3.99,0) circle(.02) node[above, black]{$\gamma_{\bp_1, 0}$}; 
\end{tikzpicture}$$
\end{eg}

In the following, we will have a look at the set of ``type I points'' of $\BA_R^1$, i.e. those multiplicative semi-norms $\lambda\in \BA_R^1$ with $\ker |\cdot|_\lambda \neq \{0_R\}$.

\begin{cor}\label{cor:top-on-type-I-points}
	Let $R$, $(K,|\cdot|)$, $F$ and $\ti F$ be as in Theorem \ref{thm:AAS-of-val-ring}.
	Denote
	$$\BA_{R,Z}^1:= \big\{\lambda\in \BA_R^1: \ker |\cdot|_\lambda\neq \{0_R\}\big\}.$$

	\smallskip\noindent
	(a) $\BA_{R,Z}^1 = \BA_F^1 \cup  K\times [1,\infty)$  (and hence, $\BA_{R,Z}^1$ is dense in $\BA_R^1$).
	
	\smallskip\noindent
	(b) Suppose that $\{(s_i, \omega_i)\}_{i\in \KI}$ is a net in $K\times [1, \infty)$.
	\begin{itemize}
		\item $\zeta_{s_i,0}^{\omega_i} \to \zeta_{Q(b), \tau_1}$ for some $b\in R$ and $\tau_1\in [0,1)$ if and only if $\omega_i\to \infty$ and $|s_i - b|^{\omega_i}\to \tau_1$;
		
		\item $\zeta_{s_i,0}^{\omega_i} \to \zeta_{0_F, \tau_2}$ for a number $\tau_2\in (1,\infty)$ if and only if $\omega_i\to \infty$ and  $|s_i|^{\omega_i}\to \tau_2$;

		\item $\zeta_{s_i,0}^{\omega_i} \to \zeta_{0_F,1}$ if and only if $\omega_i\to \infty$ and $|s_i - c|^{\omega_i}\to 1$, for any $c\in \ti F$.
	\end{itemize}

	\smallskip\noindent
	(c) If $\{(s_i, \omega_i)\}_{i\in \KI}$ is a net in $K\times [1, \infty)$ such that  $\zeta_{s_i,0}^{\omega_i} \to \zeta_{Q(b), \tau_1}$ for some $b\in R$ and $\tau_1\in [0,1)$, then $s_i$ eventually belongs to the ``open ball'' of $K$ of radius $1$ and center $b$.
\end{cor}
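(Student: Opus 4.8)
The plan is to treat the three parts in order, with part (a) carrying essentially all of the new content and parts (b), (c) following almost formally from Theorem \ref{thm:AAS-of-val-ring}(c).

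For part (a), I would decompose $\BA_R^1$ using Proposition \ref{prop:ele-AAS-val-ring} and test the kernel condition on each piece. On $Q^\BA(\BA_F^1)$ the kernel is automatically non-trivial: as $R$ is not a field, I may pick $a\in R$ with $0<|a|<1$, and then $Q(a)=0_F$ forces $|a|_\lambda=|\ti Q(a)|_\mu=0$ for every $\lambda=Q^\BA(\mu)$, so $\BA_F^1\subseteq\BA_{R,Z}^1$. On a summand $J_\omega^\BA(\BA_{K^\omega}^1)$, write $\lambda=\lambda_0^\omega$ with $\lambda_0\in\BA_K^1$. Clearing denominators (for any non-zero $\bp\in K[\bt]$ there is $a\in R^\star$ with $a\bp\in R[\bt]$) shows that $\ker|\cdot|_\lambda\cap R[\bt]$ is non-trivial exactly when the prime ideal $\ker|\cdot|_{\lambda_0}\subseteq K[\bt]$ is non-zero. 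Since $K[\bt]$ is a PID and $K$ is algebraically closed, a non-zero such prime ideal must be $(\bt-s)$ for some $s\in K$, i.e. $|\bt-s|_{\lambda_0}=0$. Expanding an arbitrary $\bp\in K[\bt]$ as $\bp=\bp(s)+(\bt-s)\bq$ and using both $|\bt-s|_{\lambda_0}=0$ and the fact that $\lambda_0$ restricts to $|\cdot|$ on $K$ (forced by contractivity together with multiplicativity, since the value group is non-trivial), one obtains $|\bp|_{\lambda_0}=|\bp(s)|=|\bp|_{\zeta_{s,0}}$, so $\lambda_0=\zeta_{s,0}$. Under the identifications fixed after Theorem \ref{thm:AAS-of-val-ring}, these are exactly the points of $K\times[1,\infty)$, which yields $\BA_{R,Z}^1=\BA_F^1\cup K\times[1,\infty)$. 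Density is then inherited from Theorem \ref{thm:AAS-of-val-ring}(d), since the dense subset $K\times(1,\infty)$ is already contained in $\BA_{R,Z}^1$.

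For part (b), I would simply specialize Theorem \ref{thm:AAS-of-val-ring}(c) to the net $\lambda_i:=\zeta_{s_i,0}$. Here $|\bp|_{\zeta_{s_i,0}}=|\bp(s_i)|$, so $|\bt-b|_{\lambda_i}^{\omega_i}=|s_i-b|^{\omega_i}$ and $|\bt|_{\lambda_i}^{\omega_i}=|s_i|^{\omega_i}$. Because the three target families $\zeta_{Q(b),\tau_1}$ (with $\tau_1\in[0,1)$), $\zeta_{0_F,\tau_2}$ (with $\tau_2\in(1,\infty)$) and $\zeta_{0_F,1}$ are pairwise distinct and exhaust $\BA_F^1$ by \eqref{eqt:BA-kk}, the trichotomy (C1)--(C3) of Theorem \ref{thm:AAS-of-val-ring}(c) translates verbatim, in both directions, into the three displayed equivalences.

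For part (c), the first bullet of part (b) already provides $\omega_i\to\infty$ and $|s_i-b|^{\omega_i}\to\tau_1<1$. In particular $|s_i-b|^{\omega_i}<1$ eventually; since every $\omega_i\geq 1$ and the map $x\mapsto x^{\omega_i}$ fixes $1$ and is increasing, $|s_i-b|^{\omega_i}<1$ forces $|s_i-b|<1$. Hence $s_i$ eventually lies in $\{t\in K:|t-b|<1\}$, the open unit ball centred at $b$. The main obstacle is the kernel computation in part (a)---specifically the identification of the evaluation seminorms $\zeta_{s,0}$ as the only points of $\BA_K^1$ with non-trivial kernel (equivalently, this could be read off from Theorem \ref{thm:berk-cl} by forcing the defining balls to shrink to a point)---whereas parts (b) and (c) are pure bookkeeping on top of the already-established Theorem \ref{thm:AAS-of-val-ring}(c).
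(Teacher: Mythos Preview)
Your proposal is correct and follows essentially the same approach as the paper: part (a) is handled via the decomposition of Proposition \ref{prop:ele-AAS-val-ring}, identifying on the $K$-side the kernel condition with $\ker|\cdot|_{\lambda_0}=(\bt-s)K[\bt]$ and hence $\lambda_0=\zeta_{s,0}$, while parts (b) and (c) are read off from Theorem \ref{thm:AAS-of-val-ring}(c) specialized to $\lambda_i=\zeta_{s_i,0}$. The paper's own proof is terser (it simply refers back to the argument of Proposition \ref{prop:ele-AAS-val-ring} for part (a) and declares parts (b), (c) immediate), but the logical content is the same.
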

\begin{proof}
Since part (b) follows from Theorem  \ref{thm:AAS-of-val-ring} and part (c) follows directly from part (b), we will only establish part (a).
In fact, it is clear that  $\BA_F^1 \cup  K\times [1,\infty)\subseteq \BA_{R, Z}^1$.
Consider any element $\lambda\in \BA_{R, Z}^1$.
If $\ker |\cdot|_\lambda \cap R \neq \{0_R\}$, the argument of Proposition \ref{prop:ele-AAS-val-ring} tells us that $\lambda\in \BA_F^1$.
Suppose that $\ker |\cdot|_\lambda \cap R = \{0_R\}$.
As in the proof of Proposition \ref{prop:ele-AAS-val-ring}, there is a unique positive number $\omega\in [1,\infty)$ such that $\lambda$ extends to an element $\bar \lambda\in \BA_{K^\omega}^1$.
Since $\ker |\cdot|_{\bar \lambda}$ (which contains $\ker |\cdot|_{\lambda}$) is a non-zero prime ideal of $K[\bt]$ and $K$ is algebraically closed, one can find a (unique) element $s\in K$ with $\ker |\cdot|_{\bar \lambda} = (\bt -s)\cdot K[\bt]$ and it is not hard to check that $\lambda = \zeta_{s,0}^\omega$.
\end{proof}

In the following, we will use Theorem \ref{thm:AAS-of-val-ring} to obtain the  Berkovich spectra of Banach group rings of finite cyclic groups over $R$.
Let $G$ be a finite abelian group and $(S, \|\cdot\|)$ be a commutative unital Banach ring.
We denote by $S[G]$ the group ring of $G$ over $S$, and endowed it with the norm $\|\sum_{g\in G} a_g g\| := \max_{g\in G} \|a_g\|$.
Clearly, $S[G]$ is a commutative unital Banach ring.

\begin{cor}\label{cor:spec-cyclic-gp}
Let $R$, $F$, $Q$ and $(K, |\cdot|)$ be as in Theorem \ref{thm:AAS-of-val-ring}.
Denote by $|\cdot|_0$ the trivial norm on $F$. 
Let $G$ be a cyclic group of order $M$ with $u$ being a generator of $G$.
Suppose $b_1,\dots,b_n$ are all the distinct $M$-th roots of unity in $K$.

\smallskip\noindent
(a) If we set $\left|\sum_{l=0}^{M-1} a_l u^l\right|_{\alpha_{b_k}^{\omega}} := \left|\sum_{l=0}^{M-1} a_l b_k^l\right|^\omega$
and
$\left|\sum_{l=0}^{M-1} a_l u^l\right|_{\beta_{Q(b_k)}} := \left|\sum_{l=0}^{M-1} Q(a_l b_k^l)\right|_0$, then $\KM(R[G]) = \{\alpha_{b_k}^{\omega}: \omega\in [1,\infty); k=1,\dots,n\}\cup \{\beta_{Q(b_k)}: k=1,\dots,n\}$. 

\smallskip\noindent
(b) As a topological space, $\KM(R[G])$ consists of $n$ intervals of the form $[1,\infty]$ corresponding to the elements $b_1,\dots,b_n$ such that the ``$1$-ends'' of all these intervals are free, while the ``$\infty$-ends'' of the two intervals corresponding to $b_k$ and $b_l$ are identified with each other if $Q(b_k) = Q(b_l)$.
\end{cor}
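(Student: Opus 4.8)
The plan is to realise $\KM(R[G])$ as an explicit closed subspace of $\BA_R^1$ and then read off both its points and its topology from Theorem~\ref{thm:AAS-of-val-ring}. I would start from the surjective contractive ring homomorphism $\pi\colon R\{\bt\}\to R[G]$ sending $\bt$ to $u$. Precomposition with $\pi$ is a continuous map $\KM(R[G])\to\KM(R\{\bt\})\subseteq\BA_R^1$, injective because $\pi$ is surjective; as $\KM(R[G])$ is compact and $\BA_R^1$ is Hausdorff, it is a homeomorphism onto its image, which one verifies to be the closed set
\[
\Sigma:=\big\{\mu\in\BA_R^1 : \mu(\bt^M-1)=0\big\}.
\]
(Note that $\Sigma\subseteq\KM(R\{\bt\})$ automatically, since $\mu(\bt^M-1)=0$ forces $\mu(\bt)^M=\mu(\bt^M)=\mu(1_R)=1$, hence $|\bt|_\mu=1$; and conversely each $\mu\in\Sigma$ factors through $\pi$ to a contractive multiplicative seminorm on $R[G]$.) Under this identification $\mu\in\Sigma$ becomes the seminorm $\sum_l a_l u^l\mapsto\mu\big(\sum_l a_l\bt^l\big)$ on $R[G]$, so the corollary reduces to describing $\Sigma$ as a subspace of $\BA_R^1$.

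For the points (part (a)): by Proposition~\ref{prop:ele-AAS-val-ring} and Theorem~\ref{thm:AAS-of-val-ring}(a),(b), $\BA_R^1$ is the disjoint union of the open set $\BA_K^1\times[1,\infty)$ and the closed set $\BA_F^1$, and I would intersect $\Sigma$ with each piece. On $\BA_K^1\times[1,\infty)$: writing $\mu=\lambda^\omega$ and factoring $\bt^M-1=\prod_{k=1}^n(\bt-b_k)^{e_k}$ over the algebraically closed field $K$, the condition $|\bt^M-1|_\lambda^\omega=0$ forces $|\bt-b_k|_\lambda=0$ for some $k$, so $\ker\lambda=(\bt-b_k)$ and $\lambda=\zeta_{b_k,0}$; thus $\mu=\zeta_{b_k,0}^\omega$, corresponding to $\alpha_{b_k}^\omega$. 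On $\BA_F^1$: since $K$, hence $F$, is algebraically closed, \eqref{eqt:BA-kk} gives $\mu=\zeta_{x,\tau}\circ\ti Q$, and factoring $\bt^M-1$ into linear factors over $F$ together with \eqref{eqt:def-zeta-s-tau} shows that $\zeta_{x,\tau}(\bt^M-1)=0$ forces $\tau=0$ and $x$ an $M$-th root of unity in $F$. Finally, completeness of $R$ shows that $Q$ maps $\{b_1,\dots,b_n\}$ onto the set of $M$-th roots of unity in $F$ (Hensel's lemma, applied---when the residue characteristic $p$ divides $M$---to $\bt^{M'}-1$ with $M'$ the prime-to-$p$ part of $M$, using that the $M$-th and $M'$-th roots of unity in $F$ coincide); hence $\mu=\beta_{Q(b_k)}$ for some $k$. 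Together with the routine check that the displayed formulas really define contractive multiplicative seminorms---$\alpha_{b_k}^\omega$ is evaluation at $b_k$ followed by $|\cdot|^\omega$, and $\beta_{Q(b_k)}$ is reduction of coefficients via $Q$ followed by evaluation at $Q(b_k)$ followed by the trivial norm---this yields (a).

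For the topology (part (b)): $\KM(R[G])$ carries the subspace topology of $\Sigma\subseteq\BA_R^1$. By Theorem~\ref{thm:AAS-of-val-ring}(b) each ray $\mathcal{R}_k:=\{\zeta_{b_k,0}^\omega:\omega\in[1,\infty)\}$ is homeomorphic to $[1,\infty)$, and by Theorem~\ref{thm:AAS-of-val-ring}(c) (condition (C1) with $b=b_k$ and $\tau_1=0$; conditions (C2) and (C3) are excluded because $|\bt|_{\zeta_{b_k,0}}=|b_k|=1$) one has $\zeta_{b_k,0}^\omega\to\beta_{Q(b_k)}$ as $\omega\to\infty$ with no other limit, so the closure of $\mathcal{R}_k$ in $\Sigma$ is a copy of $[1,\infty]$ whose $\infty$-end is $\beta_{Q(b_k)}$. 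The $1$-end $\zeta_{b_k,0}$ is free because $\{\lambda\in\BA_K^1:|\bt^M-1|_\lambda=0\}=\{\zeta_{b_1,0},\dots,\zeta_{b_n,0}\}$ is finite, so a small neighbourhood of $(\zeta_{b_k,0},1)$ in $\BA_K^1\times[1,\infty)$ meets $\Sigma$ only in a half-open sub-ray of $\mathcal{R}_k$. Distinct rays are disjoint ($\zeta_{b_k,0}^\omega=\zeta_{b_l,0}^{\omega'}$ forces $b_k=b_l$), while $\beta_{Q(b_k)}=\beta_{Q(b_l)}$ if and only if $Q(b_k)=Q(b_l)$. For the local structure at a point $\beta_{\bar d}$ (with $\bar d$ an $M$-th root of unity in $F$): each compact set $\overline{\mathcal{R}_k}\cap\Sigma$ misses $\beta_{\bar d}$ whenever $Q(b_k)\ne\bar d$, so---there being finitely many $k$---some neighbourhood of $\beta_{\bar d}$ in $\Sigma$ meets only the rays with $Q(b_k)=\bar d$; intersecting it with a basic neighbourhood of $\zeta_{\bar d,0}$ as in (N1) (the case $\tau=0$), one sees this neighbourhood equals $\{\beta_{\bar d}\}$ together with a half-open sub-ray of each such $\mathcal{R}_k$. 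Assembling these local models gives exactly the picture in (b): $n$ copies of $[1,\infty]$, one for each $b_k$, with free and pairwise distinct $1$-ends, whose $\infty$-ends are identified precisely when the corresponding $b_k$ share the same image under $Q$.

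The two steps I expect to be the main obstacle are the surjectivity of $Q$ on roots of unity when the residue characteristic divides $M$ (where one must first pass to the prime-to-$p$ part of $M$ before invoking Hensel's lemma) and, above all, making the ``bouquet at $\beta_{\bar d}$'' picture precise---that is, ruling out any further points of $\BA_R^1$ inside the relevant neighbourhoods---for which the explicit neighbourhood bases (N1)--(N3) together with the exact determination of $\Sigma$ from part (a) are what does the work.
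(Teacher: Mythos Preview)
Your approach is correct and essentially identical to the paper's: both realize $\KM(R[G])$ as the zero locus of $\bt^M-1$ inside $\BA_R^1$ via the quotient $R\{\bt\}\to R[G]$ and then split into the $\BA_K^1\times[1,\infty)$ and $\BA_F^1$ pieces---the paper simply packages this case analysis into Corollary~\ref{cor:top-on-type-I-points} on type~I points, and for part~(b) invokes parts~(b) and~(c) of that corollary where you appeal directly to Theorem~\ref{thm:AAS-of-val-ring}(c) and the bases (N1)--(N3). One simplification worth noting, since you flag it as an obstacle: the surjectivity of $Q$ on $M$-th roots of unity needs no Hensel argument---because the $b_k$ lie in $R$, the factorization $\bt^M-1=\prod_k(\bt-b_k)^{e_k}$ already holds in $R[\bt]$, and applying $\ti Q$ shows that the roots of $\bt^M-1$ in $F$ are exactly the $Q(b_k)$.
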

\begin{proof}
(a) There is a contractive and surjective ring homomorphism
\begin{equation}\label{eqt:defn-q-G}
q_G:R\{ \bt \} \to R[G]
\end{equation}
sending $\bt$ to $u$, and it is not hard to check that $\ker q_G = (\bt^M -1)\cdot R\{ \bt \}$.
Hence, one may regard $\KM(R[G])$ as a topological subspace of $\KM(R\{\bt\})$ through $q_G$ in the following way:
$$\KM(R[G]) = \{\lambda\in \BA_{R}^1: |\bt|_\lambda \leq 1; |\bt^M -1|_\lambda = 0\} \subseteq \BA_{R,Z}^1.$$
It is obvious that $\alpha_{b_k}^\omega$ and $\beta_{Q(b_k)}$ are well-defined elements in $\KM(R[G])$, and they can be identified, respectively, with the elements $\zeta_{b_k,0}^\omega$ and $\zeta_{Q(b_k), 0}$ in $\BA_{R,Z}^1$.

On the other hand, let us pick an arbitrary element $\lambda\in \KM(R[G])$.
By Corollary \ref{cor:top-on-type-I-points}(a), either $\lambda = \zeta_{s,0}^\omega$ for a unique  $(s,\omega)\in K\times [1,\infty)$ or $\lambda\in \BA_{F}^1$.
In the first case, the condition $|\bt^M -1|_{\zeta_{s,0}^\omega} = 0$ will force $s = b_k$ for some $k\in \{1,\dots,n\}$, which means that $\lambda = \alpha_{b_k}^\omega$.
In the second case, there exist $x\in F$ and $\tau\in \RP$ satisfying $\lambda = \zeta_{x, \tau}$, and the condition $|\bt^M -1|_{\zeta_{x, \tau}} = 0$ tells us that $\tau = 0$ and $x^M = 1$.
Since $Q(b_1), \dots, Q(b_n)$ are all the roots of $\bt^M -1$ in $F$, we conclude that $\lambda = \beta_{Q(b_k)}$ for some $k=1,\dots,n$.

\smallskip\noindent
(b) By Corollary \ref{cor:top-on-type-I-points}(a), it is not hard to see that the subset %
$\{\zeta_{b_k,0}^\omega: \omega\in [1,\infty); k=1,\dots,n\}$ of $\BA_{R,Z}^1$ are $n$ disjoint intervals of the form $[1,\infty)$.
Assume that $(s_i, \omega_i)\in \{b_1,\dots,b_n\}\times [1,\infty)$ ($i\in \KI$) such that $\{\zeta_{s_i,0}^{\omega_i}\}_{i\in \KI}$ converges to $\zeta_{Q(b_k),0}$ for some $k\in \{1,\dots, n\}$.
Then Corollary \ref{cor:top-on-type-I-points}(c)
tells us that $|s_i - b_k| < 1$ eventually.
In other words, $Q(s_i) = Q(b_k)$ eventually.
Conversely, it follows from Corollary \ref{cor:top-on-type-I-points}(b) that the conditions $Q(s_i) = Q(b_k)$ for large $i$ and $\omega_i\to \infty$ will imply $\zeta_{s_i,0}^{\omega_i}\to \zeta_{ Q(b_k), 0}$.
This completes the proof.
\end{proof}

In the case when the field $K$ is not algebraically closed, one may use the above as well as \cite[Corollary 1.3.6]{Berk90} to describe $\KM(R[G])$.
In the following, we will consider the case when $R$ is the ring $\BZ_p$ of $p$-adic integers and $G$ is a cyclic $p$-group (for a fixed prime number $p$).
Let us start with the following possibly known lemma.

\begin{lem}\label{lem:irred-poly}
For any $l\in \BZ_+$, the polynomial $\bq_{l+1}:=\bt^{p^l(p-1)} + \bt^{p^l(p-2)} + \cdots + \bt^{p^l} + 1$ is irreducible in $\BZ_p[\bt]$.
\end{lem}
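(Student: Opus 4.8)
The plan is to recognize $\bq_{l+1}$ as the $p^{l+1}$-th cyclotomic polynomial and then prove irreducibility by the standard Eisenstein argument after the substitution $\bt \mapsto \bt+1$. Summing the finite geometric series $1+y+\cdots+y^{p-1}=(y^p-1)/(y-1)$ at $y=\bt^{p^l}$ yields the identity
\[
\bq_{l+1}(\bt)\cdot(\bt^{p^l}-1)=\bt^{p^{l+1}}-1 \qquad\text{in }\BZ_p[\bt],
\]
so $\bq_{l+1}$ is monic of degree $p^l(p-1)$, with all $p$ of its coefficients equal to $1$, and $\bq_{l+1}(\bt)=(\bt^{p^{l+1}}-1)/(\bt^{p^l}-1)$.

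Since $\bt\mapsto\bt+1$ is a ring automorphism of $\BZ_p[\bt]$, it suffices to show that $\bq_{l+1}(\bt+1)$ is irreducible in $\BZ_p[\bt]$. This polynomial is still monic, so any factorization into two non-units of $\BZ_p[\bt]$ must be a product of two polynomials of degree at least $1$ (a non-unit constant factor is impossible, the top coefficient being $1$). Hence it is enough to verify Eisenstein's criterion at the prime $p$ for $\bq_{l+1}(\bt+1)$, as this rules out precisely such factorizations.

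For the Eisenstein verification I would argue in two steps. First, the constant term of $\bq_{l+1}(\bt+1)$ equals $\bq_{l+1}(1)=p$, which is divisible by $p$ but not by $p^2$. Second, reducing the displayed identity modulo $p$ and using $\bt^{p^m}-1\equiv(\bt-1)^{p^m}\pmod p$ (obtained by iterating $(a+b)^p\equiv a^p+b^p\pmod p$), cancellation in the integral domain $\BF_p[\bt]$ gives $\bq_{l+1}(\bt)\equiv(\bt-1)^{p^l(p-1)}\pmod p$, hence $\bq_{l+1}(\bt+1)\equiv\bt^{p^l(p-1)}\pmod p$. As $p^l(p-1)=\deg\bq_{l+1}$, every coefficient of $\bq_{l+1}(\bt+1)$ other than the leading one lies in $p\BZ_p$. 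Eisenstein's criterion then applies and finishes the proof.

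The argument is essentially routine; the two places that deserve a careful sentence are the reduction (via monicity) of an arbitrary factorization to a product of two positive-degree factors, and the congruence $\bt^{p^m}-1\equiv(\bt-1)^{p^m}\pmod p$. I do not anticipate any genuine obstacle.
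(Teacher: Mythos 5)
Your proof is correct and follows essentially the same route as the paper: both substitute $\bt\mapsto\bt+1$ and apply Eisenstein's criterion at $p$, with constant term $\bq_{l+1}(1)=p$. The only difference is bookkeeping — you get divisibility of the intermediate coefficients by reducing the identity $\bq_{l+1}(\bt)(\bt^{p^l}-1)=\bt^{p^{l+1}}-1$ modulo $p$ and cancelling in $\BF_p[\bt]$, whereas the paper manipulates the explicit identity $\sum_{i=1}^p(\bs+1)^{p-i}=\sum_{i=1}^p\binom{p}{i-1}\bs^{p-i}$ with $\bs=(\bt+1)^{p^l}-1$; both are sound.
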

\begin{proof}
We first consider the case when $l =0$.
The equalities
$$\big((\bt +1) -1\big)\cdot \sum_{i=1}^p (\bt + 1)^{p-i} = (\bt + 1)^p - 1 = \bt \cdot \sum_{i=1}^p\binom{p}{i-1}\bt^{p-i},$$
gives $\sum_{i=1}^p (\bt + 1)^{p-i} = \sum_{i=1}^p\binom{p}{i-1}\bt^{p-i}$.
Thus, it follows from the Eisenstein's criterion that the polynomial $\sum_{i=1}^p (\bt + 1)^{p-i}$ is irreducible in $\BZ_p[\bt]$, and hence so is $\sum_{i=1}^p \bt^{p-i}$.

In the case of $l \geq 1$, we set $\bs:= (\bt + 1)^{p^l} - 1$.
Since
$\sum_{i=1}^p (\bs+1)^{p-i} = \sum_{i=1}^p\binom{p}{i-1}\bs^{p-i}$, we have
$$\sum_{i=1}^p (\bt+1)^{p^l(p-i)} = \sum_{i=1}^p\binom{p}{i-1}\big((\bt+1)^{p^l} - 1\big)^{p-i}.$$
From the left hand side, we see that the coefficient of $\bt^{p^l(p-1)}$ is $1$ and  the constant coefficient is $p$. 
From the right hand side, we know that all the other coefficients of $\bt^k$ are divisible by $p$.
Thus, the Eisenstein's criterion tells us that $\sum_{i=1}^p (\bt+1)^{p^l(p-i)}$ is an irreducible polynomial in $\BZ_p[\bt]$ and hence so is $\sum_{i=1}^p \bt^{p^l(p-i)}$.
\end{proof}

\begin{eg}\label{eg:order=p}
Let $G$ be the cyclic group of order $p^N$ for a positive integer $N$.
We set $\BF_p := \BZ/p\BZ$ and consider $q_G:\BZ_p\{ \bt\} \to \BZ_p[G]$ to be the canonical quotient map.
As in the argument of Corollary \ref{cor:spec-cyclic-gp}, we may identify, through $q_G$: %
$$\KM(\BZ_p[G]) = \Big\{\lambda\in Q^\BA(\BA_{\BF_p}^1) \cup \bigcup_{\omega\in [1,\infty)} J_\omega^\BA(\BA_{\BQ_p^\omega}^1): |\bt|_\lambda \leq 1; |\bt^{p^N} -1|_\lambda = 0\Big\}$$
(see %
Proposition \ref{prop:ele-AAS-val-ring}).
By Lemma \ref{lem:irred-poly}, the prime factorization of $\bt^{p^N} -1$ in $\BZ_p[\bt]$ is
\begin{equation*}
\bt^{p^N} -1 = \bq_{0}\cdot \bq_1\cdot \bq_2\cdots \bq_{N},
\end{equation*}
where $\bq_{0}:= \bt -1$.
On the other hand, the following is the prime factorization of $\bt^{p^N} -1$ in $\BF_p[\bt]$:
\begin{equation}\label{eqt:irrd-decomp-in-Fp}
\bt^{p^N} -1 = (\bt -1)^{p^N}.
\end{equation}

Let $\BC_p$ be the completion of the algebraic closure of $\BQ_p$, let $R_p$ be the ring of integers of $\BC_p$ and let $F_p$ be the residue field of $R_p$.
For $1\leq k \leq N$, we consider $\{r_{k,1},\cdots,r_{k, n_k}\}$ to be  the set of all distinct roots of $\bq_k$ in $\BC_p$.
It follows from \eqref{eqt:irrd-decomp-in-Fp} that $\bt^{p^N} - 1 = (\bt - 1)^{p^N}$ in $F_p[\bt]$.
Hence, $Q(r_{k,i}) = 1$ for all possible $k$ and $i$.
Therefore, Corollary \ref{cor:spec-cyclic-gp}(a) tells us that
$$\KM(R_p[G]) = \{\alpha_{r_{k,i}}^\omega: \omega\in [1,\infty); 1\leq k\leq N; 1\leq i\leq n_k\}\cup \{\alpha_1^\omega:\omega\in [1,\infty)\} \cup \{\beta_1\}.$$ %
As in Corollary \ref{cor:spec-cyclic-gp}(b), the topological space $\KM(R_p[G])$ consists of $1 + \sum_{k=1}^{N} n_k$ intervals of the form %
$[1,\infty]$ with all the ``$1$-ends'' being free but with all the ``$\infty$-ends'' being identified with one point, namely, $\beta_1$.

Again, the prime factorization as in \eqref{eqt:irrd-decomp-in-Fp} ensures that %
$\KM(\BZ_p[G])\cap Q^\BA\big(\BA_{\BF_p}^1\big) = \{Q^\BA(\zeta_{1_{\BF_p}, 0})\}$, and the semi-norm %
$\bar \beta_1:= Q^\BA(\zeta_{1_{\BF_p}, 0})$ coincides with the one induced by $\beta_1\in \KM(R_p[G])$ through restriction.
On the other hand, as in \cite[Corollary 1.3.6]{Berk90}, any element $\lambda\in \KM(\BZ_p[G])\cap J_\omega^\BA(\BA_{\BQ_p^\omega}^1)$ can be extended to an element $\bar{\lambda}\in J_\omega^\BA(\BA_{\BC_p^\omega}^1)$.
It follows from $|\bt|_{\bar\lambda} \leq 1$ and $|\bt^{p^N} -1|_{\bar\lambda} = 0$ that $\bar{\lambda}$ is either $\alpha_1^\omega$ or $\alpha_{r_{k,i}}^\omega$ for suitable $k$ and $i$.

Let us set $\bar \alpha_{0}^\omega$ to be the element in $\KM(\BZ_p[G])$ induced by $\alpha_1^\omega$.
On the other hand, for a fixed $k\in \{1,2,\dots,N\}$, by considering an automorphism in the Galois group of the splitting field of the irreducible polynomial $\bq_k$ over $\BQ_p$, we know that $\alpha_{r_{k,i}}^{\omega}$ and $\alpha_{r_{k,j}}^{\omega}$ restrict to the same element in $\KM(\BZ_p[G])$ for any $i,j\in \{1,\dots,n_k\}$.
We denote the resulting element by $\bar \alpha_{k}^\omega$.
As $\bar \alpha_{k}^\omega$ comes from different irreducible factors of $\bt^{p^N} - 1$ for different $k$, they are all distinct.

Consequently, as a quotient space of $\KM(R_p[G])$, the topological space $\KM(\BZ_p[G])$ is the following subspace of $\BR^2$:
$$\begin{tikzpicture}[scale=2]

\draw (0,0) -- (0,1.5);

\draw[fill=black] (0,0) circle(.02) node[below, black]{$\bar \beta_1$};

\draw[fill=black] (0,1) node[right, black]{$\bar \alpha_{0}^{\omega}$};

\draw[fill=black] (0,1.5) circle(.02) node[right, black]{$\bar \alpha_{0}^{1}$};

\draw (0,0) -- (0.513, 1.410);

\draw[fill=black] (0.513, 1.410) circle(.02) node[right, black]{$\bar \alpha_{{1}}^{1}$};

\draw[fill=black] (0.342, 0.940) node[right, black]{$\bar \alpha_{{1}}^{\omega}$};

\draw (0,0) -- (0.964 , 1.149);

\draw[fill=black] (0.964 , 1.149) circle(.02) node[right, black]{$\bar \alpha_{{2}}^{1}$};

\draw[fill=black] (0.643 , 0.766) node[right, black]{$\bar \alpha_{{2}}^{\omega}$};

\draw[fill=black] (1.229 , 0.860) circle(.02);

\draw[fill=black] (1.410 , 0.513) circle(.02);

\draw[fill=black] (1.477 , 0.260) circle(.02);

\draw[fill=black] (1.494 , 0.131) circle(.02);

\draw (0,0) -- (1.5,0);

\draw[fill=black] (1.5,0) circle(.02) node[right, black]{$\bar \alpha_{{N}}^{1}$};

\draw[fill=black] (1,0) node[below, black]{$\bar \alpha_{{N}}^{\omega}$};

\end{tikzpicture}$$
\end{eg}

\section*{Acknowledgement}

This work is supported by the National Natural Science Foundation of China (11471168).

\end{document}